\title{Complete algebraic solution of multidimensional optimization problems in tropical semifield}
\author{N. Krivulin\thanks{Faculty of Mathematics and Mechanics, Saint Petersburg State University, 28 Universitetsky Ave., St.~Petersburg, 198504, Russia, 
nkk@math.spbu.ru.}
}
\date{}
\newtheorem{theorem}{Theorem}
\newtheorem{lemma}[theorem]{Lemma}
\newtheorem{corollary}[theorem]{Corollary}
\theoremstyle{definition}
\newtheorem{example}{Example}
\begin{document}

\maketitle

\begin{abstract}
We consider multidimensional optimization problems that are formulated in the framework of tropical mathematics to minimize functions defined on vectors over a tropical semifield (a semiring with idempotent addition and invertible multiplication). The functions, given by a matrix and calculated through multiplicative conjugate transposition, are nonlinear in the tropical mathematics sense. We start with known results on the solution of the problems with irreducible matrices. To solve the problems in the case of arbitrary (reducible) matrices, we first derive the minimum value of the objective function, and find a set of solutions. We show that all solutions of the problem satisfy a system of vector inequalities, and then use these inequalities to establish characteristic properties of the solution set. Furthermore, all solutions of the problem are represented as a family of subsets, each defined by a matrix that is obtained by using a matrix sparsification technique. We describe a backtracking procedure that allows one to reduce the brute-force generation of sparsified matrices by skipping those, which cannot provide solutions, and thus offers an economical way to obtain all subsets in the family. Finally, the characteristic properties of the solution set are used to provide complete solutions in a closed form. We illustrate the results obtained with simple numerical examples.
\\

\textbf{Key-Words:} tropical semifield, tropical optimization, matrix sparsification, complete solution, backtracking.
\\

\textbf{MSC (2010):} 65K10, 15A80, 65F50, 90C48, 68T20
\end{abstract}

\section{Introduction}

Tropical (idempotent) mathematics, which deals with the theory and applications of semirings with idempotent addition, dates back to a few seminal works \cite{Pandit1961Anew,Cuninghamegreen1962Describing,Giffler1963Scheduling,Hoffman1963Onabstract,Vorobjev1963Theextremal,Romanovskii1964Asymptotic} which appeared in the early 1960s. Today, tropical mathematics is a rapidly evolving area (see, e.g., recent publications \cite{Kolokoltsov1997Idempotent,Golan2003Semirings,Heidergott2006Maxplus,Itenberg2007Tropical,Gondran2008Graphs,Butkovic2010Maxlinear,Mceneaney2010Maxplus,Maclagan2015Introduction}), which offers a useful analytical and computational framework to solve many recent problems in operations research, computer science and other fields. These problems can be formulated and solved as optimization problems in the tropical mathematics setting, hence are referred to as tropical optimization problems. Typical examples of the application areas of tropical optimization include project scheduling \cite{Zimmermann2003Disjunctive,Tam2010Optimizing,Aminu2012Nonlinear,Krivulin2015Extremal,Krivulin2015Amultidimensional,Krivulin2017Direct}, location analysis \cite{Hudec1999Biobjective,Tharwat2010Oneclass,Krivulin2014Complete}, and decision making \cite{Elsner2004Maxalgebra,Elsner2010Maxalgebra,Gavalec2015Decision,Krivulin2016Using}.

Many tropical optimization problems are formulated to minimize or maximize functions defined on vectors over idempotent semifields (semirings with multiplicative inverses). These problems may have functions to optimize (objective functions), which can be linear or non-linear in the tropical mathematics sense, and constraints, which can take the form of vector inequalities and equalities. Some problems have direct, explicit solutions obtained using general assumptions. For other problems, only algorithmic solutions under restrictive conditions are known, which apply iterative numerical procedures to find a solution if it exists, or to indicate infeasibility of the problem otherwise. A short overview of tropical optimization problems and their solutions can be found in \cite{Krivulin2015Amultidimensional}.

In this paper, we consider the tropical optimization problems as to
\begin{equation*}
\begin{aligned}
&
\text{minimize}
&&
(\bm{A}\bm{x})^{-}\bm{x},
\end{aligned}
\qquad
\begin{aligned}
&
\text{minimize}
&&
\bm{x}^{-}\bm{A}\bm{x}
\oplus
(\bm{A}\bm{x})^{-}\bm{x},
\end{aligned}
\end{equation*}
where $\bm{A}$ is a given square matrix, $\bm{x}$ is an unknown vector, and the minus sign in the superscript serves to specify conjugate transposition of vectors. Since the objective function of the first problem is involved as a component of the composite objective function of the second, these problems are referred below to as component and composite problems for short.  

As applications, variants of these problems occur, for instance, in optimal project scheduling under the minimum flow time criterion \cite{Krivulin2015Extremal,Krivulin2015Amultidimensional,Krivulin2017Direct}, and in multicriteria decision making with pairwise comparisons \cite{Krivulin2016Using}.

Partial solutions of the problems were obtained in \cite{Krivulin2006Eigenvalues}, which specify a substantial part, but not all, of the solution sets for both irreducible and reducible matrices. The main purpose of this paper is to continue the investigation to derive complete solutions describing the entire solution set for general problems with arbitrary matrices. We follow the approach developed in \cite{Krivulin2015Soving} and based on a characterization of the solution set. We show that all solutions of the problems satisfy a vector inequality or a system of inequalities, and subsequently use these inequalities to establish characteristic properties of the solution set. The solutions are represented as a family of solution subsets, each defined by a matrix that is obtained by using a matrix sparsification technique.

Furthermore, we describe a backtracking procedure that allows one to reduce the brute-force generation of the matrices by skipping those, which cannot provide solutions, and thus offers an economical way to obtain all subsets in the family. Finally, the characteristic properties of the solution set are applied to provide a complete solution in a closed form. The results obtained are illustrated with illuminating numerical examples.

This paper further extends and supplements the results presented in the conference paper \cite{Krivulin2017Complete}, which examined only the problem with component objective function. The current paper further improves the presentation of the solution of the component problem, and offers new results on the solution of the composite problem.

The rest of the paper is organized as follows. In Section~\ref{S-PDR}, we give a brief overview of basic definitions and preliminary results of tropical algebra. Section~\ref{S-TOP} formulates the tropical optimization problems under study, and presents known solutions. In Section~\ref{S-OPCOF}, we investigate the first problem with a component objective function. As a result, a complete solution of the problem with reducible matrix is obtained in a compact vector form. The results obtained are then extended to the solution of the problem with composite objective function in Section~\ref{OPComOF}. Finally, Section~\ref{S-C} offers concluding remarks and suggestions for further research.

\section{Preliminary definitions and results}
\label{S-PDR}

We start with a brief overview of the preliminary definitions and results of tropical algebra to provide an appropriate formal background for the development of solutions for the tropical optimization problems in the subsequent sections. The overview is mainly based on the results in \cite{Krivulin2006Solution,Krivulin2006Eigenvalues,Krivulin2015Extremal,Krivulin2015Amultidimensional,Krivulin2017Direct}, which offer a useful framework to obtain solutions in a compact vector form, ready for further analysis and practical implementation. Additional details on tropical mathematics at both introductory and advanced levels can be found in many recent publications, including \cite{Kolokoltsov1997Idempotent,Golan2003Semirings,Heidergott2006Maxplus,Itenberg2007Tropical,Gondran2008Graphs,Butkovic2010Maxlinear,Mceneaney2010Maxplus,Maclagan2015Introduction}.  

\subsection{Idempotent semifield}

An \emph{idempotent semifield} is a system $(\mathbb{X},\mathbb{0},\mathbb{1},\oplus,\otimes)$, where $\mathbb{X}$ is a nonempty set endowed with associative and commutative operations, addition $\oplus$ and multiplication $\otimes$, which have as neutral elements the zero $\mathbb{0}$ and the one $\mathbb{1}$. Addition is idempotent, that is $x\oplus x=x$ for all $x\in\mathbb{X}$. Multiplication distributes over addition, has $\mathbb{0}$ as absorbing element, and is invertible, which gives any nonzero $x$ its inverse $x^{-1}$ such that $x\otimes x^{-1}=\mathbb{1}$.

Idempotent addition induces on $\mathbb{X}$ a partial order such that $x\leq y$ if and only if $x\oplus y=y$. With respect to this order, both addition and multiplication are monotone, which means that, for all $x,y,z\in\mathbb{X}$, the inequality $x\leq y$ entails that $x\oplus z\leq y\oplus z$ and $x\otimes z\leq y\otimes z$. Furthermore, inversion is antitone to take the inequality $x\leq y$ into $x^{-1}\geq y^{-1}$ for all nonzero $x$ and $y$. The inequality $x\oplus y\leq z$ is equivalent to the pair of inequalities $x\leq z$ and $y\leq z$. Finally, since $x\oplus\mathbb{0}=x$ implies that $x\geq\mathbb{0}$ for any $x$, the zero $\mathbb{0}$ is the least element of $\mathbb{X}$. The partial order is assumed to extend to a total order on the semifield.  

The power notation with integer exponents is routinely defined to represent iterated products for all $x\ne\mathbb{0}$ and integer $p\geq1$ in the form $x^{0}=\mathbb{1}$, $x^{p}=x\otimes x^{p-1}$, $x^{-p}=(x^{-1})^{p}$, and $\mathbb{0}^{p}=\mathbb{0}$. Moreover, the equation $x^{p}=a$ is assumed to have a unique solution $x=a^{1/p}$ for any $a$, which extends the notation to rational exponents. In what follows, the multiplication sign $\otimes$ is, as usual, dropped to save writing.

A typical example of a semifield is the system $(\mathbb{R}\cup\{-\infty\},-\infty,0,\max,+)$, which is usually referred to as the \emph{max-plus algebra}. In this semifield, the addition $\oplus$ is defined as $\max$, and the multiplication $\otimes$ is as arithmetic addition $+$. The number $-\infty$ is taken as the zero $\mathbb{0}$, and $0$ is as the one $\mathbb{1}$. For each $x\in\mathbb{R}$, the inverse $x^{-1}$ coincides with the conventional opposite number $-x$. For any rational $y$, the power $x^{y}$ corresponds to the arithmetic product $x\times y$. The order induced by idempotent addition complies with the natural linear order on $\mathbb{R}$.

\subsection{Matrix and vector algebra}

The set of matrices over $\mathbb{X}$ with $m$ rows and $n$ columns is denoted by $\mathbb{X}^{m\times n}$. A matrix with all entries equal to $\mathbb{0}$ is the \emph{zero matrix} denoted by $\bm{0}$. A matrix without zero rows (columns) is called row- (column-) regular.

For any matrices $\bm{A},\bm{B}\in\mathbb{X}^{m\times n}$ and $\bm{C}\in\mathbb{X}^{n\times l}$, and scalar $x\in\mathbb{X}$, matrix addition, matrix multiplication and scalar multiplication are routinely defined by the entry-wise formulas
$$
\{\bm{A}\oplus\bm{B}\}_{ij}
=
\{\bm{A}\}_{ij}
\oplus
\{\bm{B}\}_{ij},
\quad
\{\bm{A}\bm{C}\}_{ij}
=
\bigoplus_{k=1}^{n}\{\bm{A}\}_{ik}\{\bm{C}\}_{kj},
\quad
\{x\bm{A}\}_{ij}
=
x\{\bm{A}\}_{ij}.
$$

For any nonzero matrix $\bm{A}=(a_{ij})\in\mathbb{X}^{m\times n}$, the \emph{conjugate transpose} is the matrix $\bm{A}^{-}=(a_{ij}^{-})\in\mathbb{X}^{n\times m}$, where $a_{ij}^{-}=a_{ji}^{-1}$ if $a_{ji}\ne\mathbb{0}$, and $a_{ij}^{-}=\mathbb{0}$ otherwise.

The properties of scalar addition, multiplication and inversion with respect to the order relations are extended entry-wise to the matrix operations. 

Consider square matrices, which act as key components of the description and solution of the multidimensional optimization problems below. A square matrix is diagonal, if its off-diagonal entries are all equal to $\mathbb{0}$. A diagonal matrix with all diagonal entries equal to $\mathbb{1}$ is the \emph{identity matrix} denoted by $\bm{I}$. The power notation with non-negative integer exponents serves to represent repeated multiplication as $\bm{A}^{0}=\bm{I}$, $\bm{A}^{p}=\bm{A}\bm{A}^{p-1}$ and $\bm{0}^{p}=\bm{0}$ for any non-zero square matrix $\bm{A}$ and integer $p\geq1$.

If a row-regular matrix $\bm{A}$ has exactly one non-zero entry in each row, then the inequalities $\bm{A}^{-}\bm{A}\leq\bm{I}$ and $\bm{A}\bm{A}^{-}\geq\bm{I}$ hold (corresponding, in the context of relational algebra, to the univalent and total properties of a relation $\bm{A}$).

The \emph{trace} of a square matrix $\bm{A}=(a_{ij})\in\mathbb{X}^{n\times n}$ of order $n$ is routinely defined as
$$
\mathop\mathrm{tr}\bm{A}
=
a_{11}\oplus\cdots\oplus a_{nn}
=
\bigoplus_{i=1}^{n}a_{ii},
$$
and retains the standard properties of traces with respect to matrix addition and to matrix and scalar multiplications.   

To represent solutions proposed in the subsequent sections, we exploit the function, which takes any square matrix $\bm{A}\in\mathbb{X}^{n\times n}$ to the scalar
\begin{equation*}
\mathop\mathrm{Tr}(\bm{A})
=
\mathop\mathrm{tr}\bm{A}\oplus\cdots\oplus\mathop\mathrm{tr}\bm{A}^{n}
=
\bigoplus_{m=1}^{n}
\mathop\mathrm{tr}\bm{A}^{m}.
\end{equation*}

Provided that the condition $\mathop\mathrm{Tr}(\bm{A})\leq\mathbb{1}$ holds, the \emph{asterisk and plus operators} (also known as the Kleene star and Kleene plus) map $\bm{A}$ to the matrices
\begin{equation*}
\bm{A}^{\ast}
=
\bm{I}\oplus\bm{A}\oplus\cdots\oplus\bm{A}^{n-1}
=
\bigoplus_{m=0}^{n-1}\bm{A}^{m},
\qquad
\bm{A}^{+}
=
\bm{A}\oplus\cdots\oplus\bm{A}^{n}
=
\bigoplus_{m=1}^{n}\bm{A}^{m}.
\end{equation*}

If $\mathop\mathrm{Tr}(\bm{A})\leq\mathbb{1}$, then the inequality $\bm{A}^{k}\leq\bm{A}^{\ast}$ holds for all integers $k\geq0$. As a consequence, the inequality $\bm{A}^{+}=\bm{A}\bm{A}^{\ast}\leq\bm{A}^{\ast}$ is valid as well. 

The description of the solutions also involves the matrix $\bm{A}^{\times}$ which is obtained from $\bm{A}$ as follows. First, we assume that $\mathop\mathrm{Tr}(\bm{A})\leq\mathbb{1}$, and calculate the matrix $\bm{A}^{+}$. Then, $\bm{A}^{\times}$ is constructed by taking those columns in the matrix $\bm{A}^{+}$ which have their diagonal entries equal to $\mathbb{1}$, and thus, in general, can be a non-square matrix.

Any matrix that consists of one row (column) is considered a row (column) vector. All vectors are assumed to be column vectors, unless otherwise specified. The set of column vectors of order $n$ is denoted $\mathbb{X}^{n}$. A vector with all zero elements is the zero vector $\bm{0}$. A vector is regular if it has no zero elements.

For any non-zero vector $\bm{x}=(x_{j})\in\mathbb{X}^{n}$, the conjugate transpose is the row vector $\bm{x}^{-}=(x_{j}^{-})$, where $x_{j}^{-}=x_{j}^{-1}$ if $x_{j}\ne\mathbb{0}$, and $x_{j}^{-}=\mathbb{0}$ otherwise.

For any non-zero vector $\bm{x}$, the equality $\bm{x}^{-}\bm{x}=\mathbb{1}$ is obviously valid.

For any regular vectors $\bm{x},\bm{y}\in\mathbb{X}^{n}$, the matrix inequality $\bm{x}\bm{y}^{-}\geq(\bm{x}^{-}\bm{y})^{-1}\bm{I}$ holds and becomes $\bm{x}\bm{x}^{-}\geq\bm{I}$ when $\bm{y}=\bm{x}$.

A vector $\bm{b}$ is said to be linearly dependent on vectors $\bm{a}_{1},\ldots,\bm{a}_{n}$ if the equality $\bm{b}=x_{1}\bm{a}_{1}\oplus\cdots\oplus x_{n}\bm{a}_{n}$ holds for some scalars $x_{1},\ldots,x_{n}$. The vector $\bm{b}$ is linearly dependent on $\bm{a}_{1},\ldots,\bm{a}_{n}$ if and only if the condition $(\bm{A}(\bm{b}^{-}\bm{A})^{-})^{-}\bm{b}=\mathbb{1}$ is valid, where $\bm{A}$ is the matrix with the vectors $\bm{a}_{1},\ldots,\bm{a}_{n}$ as its columns.

A system of vectors $\bm{a}_{1},\ldots,\bm{a}_{n}$ is linearly dependent if at least one vector is linearly dependent on others, and linearly independent otherwise.

Suppose that the system $\bm{a}_{1},\ldots,\bm{a}_{n}$ is linearly dependent. To construct a maximal linearly independent system, we use a procedure that sequentially reduces the system until it becomes linearly independent. The procedure applies the above condition to examine the vectors one by one to remove a vector if it is linearly dependent on others, or to leave the vector in the system otherwise. 

A scalar $\lambda\in\mathbb{X}$ is an \emph{eigenvalue} and a non-zero vector $\bm{x}\in\mathbb{X}^{n}$ is a corresponding \emph{eigenvector} of a square matrix $\bm{A}\in\mathbb{X}^{n\times n}$ if they satisfy the equality
$$
\bm{A}\bm{x}
=
\lambda\bm{x}.
$$

\subsection{Reducible and irreducible matrices}

A matrix $\bm{A}\in\mathbb{X}^{n\times n}$ is reducible if simultaneous permutations of its rows and columns can transform it into a block-triangular normal form, and irreducible otherwise. The \emph{lower block-triangular normal form} of the matrix $\bm{A}$ is given by
\begin{equation}
\bm{A}
=
\left(
\begin{array}{cccc}
\bm{A}_{11} & \bm{0} & \ldots & \bm{0}
\\
\bm{A}_{21} & \bm{A}_{22} & & \bm{0}
\\
\vdots & \vdots & \ddots &
\\
\bm{A}_{s1} & \bm{A}_{s2} & \ldots & \bm{A}_{ss}
\end{array}
\right),
\label{E-MNF}
\end{equation}
where, in each block row $i=1,\ldots,s$, the diagonal block $\bm{A}_{ii}$ is either irreducible or the zero square matrix of order $n_{i}$, the off-diagonal blocks $\bm{A}_{ij}$ are arbitrary matrices of size $n_{i}\times n_{j}$ for all $j<i$, and $n_{1}+\cdots+n_{s}=n$.

Any irreducible matrix $\bm{A}$ has only one eigenvalue, which is calculated as
\begin{equation}
\lambda
=
\mathop\mathrm{tr}\bm{A}
\oplus\cdots\oplus
\mathop\mathrm{tr}\nolimits^{1/n}(\bm{A}^{n})
=
\bigoplus_{m=1}^{n}\mathop\mathrm{tr}\nolimits^{1/m}(\bm{A}^{m}).
\label{E-lambda-tr1mAm}
\end{equation}

From \eqref{E-lambda-tr1mAm} it follows, in particular, that $\mathrm{tr}(\bm{A}^{m})\leq\lambda^{m}$ for all $m=1,\ldots,n$.

All eigenvectors of the irreducible matrix $\bm{A}$ are regular, and given by
$$
\bm{x}
=
(\lambda^{-1}\bm{A})^{\times}\bm{u},
$$
where $\bm{u}$ is any regular vector of appropriate size.

Note that every irreducible matrix is both row- and column-regular.

Let $\bm{A}$ be a matrix represented in the form \eqref{E-MNF}. Denote by $\lambda_{i}$ the eigenvalue of the diagonal block $\bm{A}_{ii}$ for $i=1,\ldots,s$. Then, the scalar $\lambda=\lambda_{1}\oplus\cdots\oplus\lambda_{s}$ is the maximum eigenvalue of the matrix $\bm{A}$, which is referred to as the spectral radius of $\bm{A}$ and calculated as \eqref{E-lambda-tr1mAm}. For any irreducible matrix, the spectral radius coincides with the unique eigenvalue of the matrix.

Without loss of generality, the normal form \eqref{E-MNF} can be assumed to order all block rows, which have non-zero blocks on the diagonal and zero blocks elsewhere, before the block rows with non-zero off-diagonal blocks. Moreover, the rows, which have non-zero blocks only on the diagonal, can be arranged in increasing order of the eigenvalues of diagonal blocks. Then, the normal form is refined as
\begin{equation}
\bm{A}
=
\left(
\begin{array}{cccccc}
\bm{A}_{11} & & \bm{0} & \bm{0} & \ldots & \bm{0}
\\
& \ddots & & \vdots & & \vdots
\\
\bm{0} & & \bm{A}_{rr} & \bm{0} & \ldots & \bm{0}
\\
\bm{A}_{r+1,1} & \ldots & \bm{A}_{r+1,r} & \bm{A}_{r+1,r+1} & & \bm{0}
\\
\vdots & & \vdots & \vdots & \ddots
\\
\bm{A}_{s1} & \ldots & \bm{A}_{sr} & \bm{A}_{s,r+1} & \ldots & \bm{A}_{ss}
\end{array}
\right),
\label{E-MNFr}
\end{equation}
where the eigenvalues of $\bm{A}_{11},\ldots,\bm{A}_{rr}$ satisfy the condition $\lambda_{1}\leq\cdots\leq\lambda_{r}$, and each row $i=r+1,\ldots,s$ has a block $\bm{A}_{ij}\ne\bm{0}$ for some $j<i$.

\subsection{Vector inequalities and equations}

In this subsection, we present solutions to vector inequalities, which appear below in the analysis of the optimization problems under study. 

Suppose that, given a matrix $\bm{A}\in\mathbb{X}^{m\times n}$ and vector $\bm{d}\in\mathbb{X}^{m}$, we need to find vectors $\bm{x}\in\mathbb{X}^{n}$ to satisfy the inequality
\begin{equation}
\bm{A}\bm{x}
\leq
\bm{d}.
\label{I-Axleqd}
\end{equation}

A direct solution proposed in \cite{Krivulin2015Extremal} can be obtained as follows.
\begin{lemma}
\label{L-Axleqd}
For any column-regular matrix $\bm{A}$ and regular vector $\bm{d}$, all solutions to inequality \eqref{I-Axleqd} are given by the inequality $\bm{x}\leq(\bm{d}^{-}\bm{A})^{-}$.
\end{lemma}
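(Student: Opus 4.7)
The plan is to prove the equivalence by establishing the two directions separately, exploiting the regularity hypotheses to ensure that $\bm{d}^{-}\bm{A}$ is a regular row vector and that the familiar identity $\bm{v}^{-}\bm{v}=\mathbb{1}$ is available.

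For the necessity direction, I would start from $\bm{A}\bm{x}\leq\bm{d}$ and left-multiply both sides by $\bm{d}^{-}$, using the entry-wise monotonicity of matrix multiplication. This yields $\bm{d}^{-}\bm{A}\bm{x}\leq\bm{d}^{-}\bm{d}=\mathbb{1}$. The inequality $\bm{d}^{-}\bm{A}\bm{x}\leq\mathbb{1}$ is a scalar inequality saying that the tropical sum $\bigoplus_{j}(\bm{d}^{-}\bm{A})_{j}\,x_{j}\leq\mathbb{1}$; using the fact that a tropical sum bounded by $\mathbb{1}$ bounds each summand, I get $(\bm{d}^{-}\bm{A})_{j}\,x_{j}\leq\mathbb{1}$ for every $j$. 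Since $\bm{A}$ is column-regular and $\bm{d}$ is regular, each entry $(\bm{d}^{-}\bm{A})_{j}$ is non-zero and hence invertible, so multiplying on the left by its inverse gives $x_{j}\leq(\bm{d}^{-}\bm{A})_{j}^{-1}=((\bm{d}^{-}\bm{A})^{-})_{j}$, i.e., $\bm{x}\leq(\bm{d}^{-}\bm{A})^{-}$.

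For the sufficiency direction, by monotonicity it suffices to check that the particular vector $\bm{x}_{0}=(\bm{d}^{-}\bm{A})^{-}$ itself satisfies $\bm{A}\bm{x}_{0}\leq\bm{d}$. I would compute $\bm{d}^{-}\bm{A}\bm{x}_{0}=\bm{d}^{-}\bm{A}(\bm{d}^{-}\bm{A})^{-}=\mathbb{1}$, using the identity $\bm{v}\bm{v}^{-}=\mathbb{1}$ applied to the regular row vector $\bm{v}=\bm{d}^{-}\bm{A}$ (whose regularity is exactly what the hypotheses on $\bm{A}$ and $\bm{d}$ guarantee). Setting $\bm{u}=\bm{A}\bm{x}_{0}$, the relation $\bm{d}^{-}\bm{u}\leq\mathbb{1}$ again splits componentwise into $d_{i}^{-1}u_{i}\leq\mathbb{1}$, i.e., $u_{i}\leq d_{i}$, giving $\bm{A}\bm{x}_{0}\leq\bm{d}$. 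Any $\bm{x}\leq\bm{x}_{0}$ then satisfies $\bm{A}\bm{x}\leq\bm{A}\bm{x}_{0}\leq\bm{d}$ by monotonicity.

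There is essentially no hard step here; the only technical point to be careful about is justifying that $\bm{d}^{-}\bm{A}$ has no zero entries, which is exactly why column-regularity of $\bm{A}$ and regularity of $\bm{d}$ are assumed — without these, either the conjugate transpose $(\bm{d}^{-}\bm{A})^{-}$ would contain zero entries and give a trivial (or ill-posed) bound, or the step passing from the componentwise inequality $(\bm{d}^{-}\bm{A})_{j}\,x_{j}\leq\mathbb{1}$ back to $x_{j}\leq((\bm{d}^{-}\bm{A})^{-})_{j}$ would break.
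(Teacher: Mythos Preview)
Your proof is correct and is the standard argument for this well-known residuation result. Note that the paper does not actually give a proof of this lemma; it is quoted from \cite{Krivulin2015Extremal} as a known fact, so there is no in-paper proof to compare against.
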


Next, we consider the following problem: given a matrix $\bm{A}\in\mathbb{X}^{n\times n}$, find regular vectors $\bm{x}\in\mathbb{X}^{n}$ to satisfy the inequality
\begin{equation}
\bm{A}\bm{x}
\leq
\bm{x}.
\label{I-Axleqx}
\end{equation}


The following result \cite{Krivulin2006Solution,Krivulin2015Amultidimensional} offers a direct solution to inequality \eqref{I-Axleqx}.
\begin{theorem}
\label{T-Axleqx}
For any matrix $\bm{A}$, the following statements hold:
\begin{enumerate}
\item If $\mathop\mathrm{Tr}(\bm{A})\leq\mathbb{1}$, then all regular solutions to \eqref{I-Axleqx} are given by $\bm{x}=\bm{A}^{\ast}\bm{u}$, where $\bm{u}$ is any regular vector.
\item If $\mathop\mathrm{Tr}(\bm{A})>\mathbb{1}$, then there is no regular solution.
\end{enumerate}
\end{theorem}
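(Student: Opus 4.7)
The plan is to handle the two parts separately: part~1 follows from iterating the inequality and using the key identity $\bm{A}\bm{A}^{\ast}=\bm{A}^{+}\leq\bm{A}^{\ast}$ recorded in the preliminaries, while part~2 follows by reading the inequality along the diagonal. For sufficiency in part~1, assume $\mathop\mathrm{Tr}(\bm{A})\leq\mathbb{1}$ and take any regular $\bm{u}$. Setting $\bm{x}=\bm{A}^{\ast}\bm{u}$, monotonicity together with the cited identity gives $\bm{A}\bm{x}=\bm{A}\bm{A}^{\ast}\bm{u}\leq\bm{A}^{\ast}\bm{u}=\bm{x}$. Regularity of $\bm{x}$ is inherited from $\bm{u}$ because $\bm{A}^{\ast}\geq\bm{I}$ forces each component of $\bm{A}^{\ast}\bm{u}$ to satisfy $\{\bm{A}^{\ast}\bm{u}\}_{i}\geq u_{i}>\mathbb{0}$.

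For the converse in part~1, suppose $\bm{x}$ is a regular solution of $\bm{A}\bm{x}\leq\bm{x}$. Monotonicity of matrix multiplication applied iteratively yields $\bm{A}^{k}\bm{x}\leq\bm{x}$ for every integer $k\geq0$. Summing these inequalities over $k=0,1,\ldots,n-1$ gives $\bm{A}^{\ast}\bm{x}\leq\bm{x}$, while the reverse inequality $\bm{A}^{\ast}\bm{x}\geq\bm{I}\bm{x}=\bm{x}$ is immediate from $\bm{A}^{\ast}\geq\bm{I}$. Combining the two produces $\bm{A}^{\ast}\bm{x}=\bm{x}$, so $\bm{x}$ admits the required representation with $\bm{u}=\bm{x}$.

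For part~2, I would prove the contrapositive: if a regular solution exists, then $\mathop\mathrm{Tr}(\bm{A})\leq\mathbb{1}$. Starting again from $\bm{A}^{m}\bm{x}\leq\bm{x}$ for every $m=1,\ldots,n$, read the $i$-th coordinate to obtain $\{\bm{A}^{m}\}_{ii}x_{i}\leq\bigoplus_{j}\{\bm{A}^{m}\}_{ij}x_{j}\leq x_{i}$. Because $\bm{x}$ is regular, I may multiply by $x_{i}^{-1}$ to conclude $\{\bm{A}^{m}\}_{ii}\leq\mathbb{1}$, and summing over $i$ gives $\mathop\mathrm{tr}(\bm{A}^{m})\leq\mathbb{1}$. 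Taking the idempotent sum over $m=1,\ldots,n$ yields $\mathop\mathrm{Tr}(\bm{A})\leq\mathbb{1}$, contradicting the hypothesis of part~2 and thus ruling out regular solutions.

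No step in the argument looks technically difficult; the only point requiring care is the explicit use of regularity of $\bm{x}$, which is essential both for the coordinate-wise cancellation by $x_{i}^{-1}$ in part~2 and for ensuring that the candidate $\bm{x}=\bm{A}^{\ast}\bm{u}$ in part~1 is itself regular. If there is any subtlety at all, it is making sure the iteration $\bm{A}^{k}\bm{x}\leq\bm{x}$ is justified strictly from the monotonicity properties stated for the semifield rather than from any cancellation that would fail in the idempotent setting.
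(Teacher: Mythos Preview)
Your argument is correct. Note, however, that the paper does not actually prove this theorem: it is quoted in the preliminaries as a known result from \cite{Krivulin2006Solution,Krivulin2015Amultidimensional}, so there is no ``paper's own proof'' to compare against. What you have written is the standard proof one would expect for this statement, and every step is justified by facts recorded in the paper's preliminaries (monotonicity of the operations, the identity $\bm{A}\bm{A}^{\ast}=\bm{A}^{+}\leq\bm{A}^{\ast}$ under $\mathop\mathrm{Tr}(\bm{A})\leq\mathbb{1}$, the equivalence of $x\oplus y\leq z$ with the pair $x\leq z$, $y\leq z$, and invertibility of nonzero elements). Your remark about where regularity of $\bm{x}$ is genuinely used---to cancel $x_{i}$ in part~2 and to guarantee regularity of $\bm{A}^{\ast}\bm{u}$ in part~1---is on point, and the iteration $\bm{A}^{k}\bm{x}\leq\bm{x}$ indeed follows purely from monotonicity without any cancellation.
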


We conclude this subsection with a solution to a vector equation. Given a matrix $\bm{A}\in\mathbb{X}^{n\times n}$ and a vector $\bm{b}\in\mathbb{X}^{n}$, the problem is to find regular vectors $\bm{x}\in\mathbb{X}^{n}$ that solve the equation
\begin{equation}
\bm{A}\bm{x}\oplus\bm{b}
=
\bm{x}.
\label{E-Axbeqx}
\end{equation}

The next statement \cite{Krivulin2006Solution} offers a solution when the matrix $\bm{A}$ is irreducible.
\begin{theorem}
\label{T-Axbeqx}
For any irreducible matrix $\bm{A}$ and non-zero vector $\bm{b}$, the following statements hold:
\begin{enumerate}
\item If $\mathop\mathrm{Tr}(\bm{A})<\mathbb{1}$, then equation \eqref{E-Axbeqx} has the unique regular solution $\bm{x}=\bm{A}^{\ast}\bm{b}$.
\item If $\mathop\mathrm{Tr}(\bm{A})=\mathbb{1}$, then all regular solutions to \eqref{E-Axbeqx} are given by $\bm{x}=\bm{A}^{\ast}\bm{b}\oplus\bm{A}^{\times}\bm{u}$, where $\bm{u}$ is any regular vector of appropriate size.
\item If $\mathop\mathrm{Tr}(\bm{A})>\mathbb{1}$, then there is no regular solution.
\end{enumerate}
\end{theorem}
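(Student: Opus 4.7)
The plan is to reduce the equation $\bm{A}\bm{x}\oplus\bm{b}=\bm{x}$ to the equivalent pair of inequalities $\bm{A}\bm{x}\leq\bm{x}$ and $\bm{b}\leq\bm{x}$, use Theorem~\ref{T-Axleqx} to parametrize all regular solutions of the first inequality, and then impose the extra constraint coming from $\bm{b}$ together with the equality. This immediately disposes of Case~3: if $\mathop\mathrm{Tr}(\bm{A})>\mathbb{1}$, then $\bm{A}\bm{x}\leq\bm{x}$ admits no regular solution by Theorem~\ref{T-Axleqx}, so neither does the equation.

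For Cases~1 and~2, I first verify that $\bm{x}=\bm{A}^{\ast}\bm{b}$ is a solution whenever $\mathop\mathrm{Tr}(\bm{A})\leq\mathbb{1}$, via the direct computation
$$
\bm{A}(\bm{A}^{\ast}\bm{b})\oplus\bm{b}
=\bm{A}^{+}\bm{b}\oplus\bm{b}
=(\bm{I}\oplus\bm{A}\oplus\cdots\oplus\bm{A}^{n})\bm{b}
=\bm{A}^{\ast}\bm{b},
$$
where the last equality uses $\bm{A}^{n}\leq\bm{A}^{\ast}$, a consequence of $\mathop\mathrm{Tr}(\bm{A})\leq\mathbb{1}$. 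Irreducibility of $\bm{A}$ ensures that $\bm{A}^{\ast}\bm{b}$ is regular. To obtain a lower bound on any regular solution, I iterate the equation to get $\bm{x}=\bm{A}^{k}\bm{x}\oplus(\bm{I}\oplus\bm{A}\oplus\cdots\oplus\bm{A}^{k-1})\bm{b}$; taking $k=n$ yields $\bm{x}\geq\bm{A}^{\ast}\bm{b}$.

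For the uniqueness in Case~1, I use Theorem~\ref{T-Axleqx} to write any regular solution as $\bm{x}=\bm{A}^{\ast}\bm{u}$ for some regular $\bm{u}$. Substituting into the equation reduces to $\bm{u}\leq\bm{A}^{+}\bm{u}\oplus\bm{b}$, which upon iterating $k$ times becomes $\bm{u}\leq(\bm{A}^{+})^{k}\bm{u}\oplus\bm{A}^{\ast}\bm{b}$. Under $\mathop\mathrm{Tr}(\bm{A})<\mathbb{1}$ (equivalently, spectral radius $\lambda<\mathbb{1}$ for irreducible $\bm{A}$), the estimate $\bm{A}^{k}\leq\lambda^{k}(\lambda^{-1}\bm{A})^{\ast}$ drives $(\bm{A}^{+})^{k}\bm{u}$ to $\bm{0}$, so $\bm{u}\leq\bm{A}^{\ast}\bm{b}$ and therefore $\bm{x}=\bm{A}^{\ast}\bm{u}\leq\bm{A}^{\ast}\bm{A}^{\ast}\bm{b}=\bm{A}^{\ast}\bm{b}$. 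Combining with the lower bound already established gives $\bm{x}=\bm{A}^{\ast}\bm{b}$.

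For Case~2, verifying that $\bm{x}=\bm{A}^{\ast}\bm{b}\oplus\bm{A}^{\times}\bm{u}$ is a solution is a direct computation once one uses the eigenvector identity $\bm{A}\bm{A}^{\times}=\bm{A}^{\times}$, which reflects that the columns of $\bm{A}^{\times}$ are eigenvectors of $\bm{A}$ for the eigenvalue $\lambda=\mathbb{1}$. The converse direction is where I expect the main difficulty: one must show that every regular solution admits this representation. The natural route is to decompose $\bm{u}$ according to which diagonal entries of $\bm{A}^{+}$ are equal to $\mathbb{1}$ (the critical indices that contribute columns to $\bm{A}^{\times}$) and which are strictly below $\mathbb{1}$. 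On the non-critical coordinates, a variant of the iteration argument from Case~1 forces the corresponding contribution to be absorbed into $\bm{A}^{\ast}\bm{b}$, while on the critical coordinates the columns of $\bm{A}^{\times}$ provide the free parameters encoded by an arbitrary regular $\bm{u}$. Carrying out this decomposition rigorously, either through the graph-theoretic structure of the critical cycles or through algebraic manipulation of the columns of $\bm{A}^{+}$ with unit diagonal, is the technical heart of the proof.
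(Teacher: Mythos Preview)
The paper does not include a proof of Theorem~\ref{T-Axbeqx}: it is stated in the preliminaries as a known result imported from \cite{Krivulin2006Solution}, with no accompanying argument. There is therefore no in-paper proof to compare your proposal against.

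As a standalone sketch your plan is the standard one and is largely sound. Case~3 is correct as written, and the verifications that $\bm{A}^{\ast}\bm{b}$ and $\bm{A}^{\ast}\bm{b}\oplus\bm{A}^{\times}\bm{u}$ are solutions go through (the identity $\bm{A}\bm{A}^{\times}=\bm{A}^{\times}$ holds because the columns of $\bm{A}^{\times}$ are eigenvectors for the eigenvalue $\lambda=\mathbb{1}$, which follows from $\mathop\mathrm{Tr}(\bm{A})=\mathbb{1}$ for irreducible $\bm{A}$). Two caveats are worth flagging. In Case~1, your uniqueness argument invokes ``$(\bm{A}^{+})^{k}\bm{u}\to\bm{0}$'', a limit statement; in the abstract semifield setting of the paper (totally ordered, radicable) this tacitly uses an Archimedean-type property that is not among the listed axioms, though it holds in $\mathbb{R}_{\max,+}$. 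You can make this finite by iterating to $\bm{x}=\bm{A}^{kn}\bm{x}\oplus\bm{A}^{\ast}\bm{b}$ and then bounding entries of $\bm{A}^{kn}$ via the eigenvector of $\bm{A}$, but some Archimedean hypothesis is still implicitly needed. In Case~2, you rightly identify the converse inclusion as the substantive part and give only an outline; carrying out the critical/non-critical index decomposition rigorously is precisely the content supplied by the cited reference, and your sketch does not yet constitute a proof.
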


\section{Tropical optimization problems}
\label{S-TOP}

We are now in a position to describe the optimization problems under study, and to provide some preliminary solution to the problems. Given a matrix $\bm{A}\in\mathbb{X}^{n\times n}$, the problems are formulated to minimize different objective functions that are defined on vectors $\bm{x}\in\mathbb{X}^{n}$ through the matrix $\bm{A}$ by conjugate transposition.

We start with the problem to find regular vectors $\bm{x}$ that 
\begin{equation}
\begin{aligned}
&
\text{minimize}
&&
\bm{x}^{-}\bm{A}\bm{x}.
\end{aligned}
\label{P-minxAx}
\end{equation}

A complete direct solution of the problem is obtained in \cite{Krivulin2015Extremal,Krivulin2015Amultidimensional} as follows.
\begin{lemma}\label{L-minxAx}
Let $\bm{A}$ be a matrix with spectral radius $\lambda>\mathbb{0}$. Then, the minimum value in problem \eqref{P-minxAx} is equal to $\lambda$, and all regular solutions are given by $\bm{x}=(\lambda^{-1}\bm{A})^{\ast}\bm{u}$, where $\bm{u}$ is any regular vector.
\end{lemma}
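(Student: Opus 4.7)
The plan is to reduce both parts of the claim to Theorem~\ref{T-Axleqx} applied to a rescaling of $\bm{A}$. The bridge is the following observation: for any regular $\bm{x}$, the matrix inequality $\bm{x}\bm{x}^{-}\geq\bm{I}$ from the preliminaries, right-multiplied by $\bm{A}\bm{x}$, gives $\bm{A}\bm{x}\leq\bm{x}\bm{x}^{-}\bm{A}\bm{x}=\mu\bm{x}$, where $\mu=\bm{x}^{-}\bm{A}\bm{x}$ is scalar. Thus minimizing $\bm{x}^{-}\bm{A}\bm{x}$ over regular $\bm{x}$ amounts to finding the smallest $\mu$ for which the inequality $\bm{A}\bm{x}\leq\mu\bm{x}$ admits a regular solution.

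For the lower bound I would argue as follows. Fix a regular $\bm{x}$ and set $\mu=\bm{x}^{-}\bm{A}\bm{x}$. The degenerate case $\mu=\mathbb{0}$ would force $\bm{A}\bm{x}=\bm{0}$ and then, since $\bm{x}$ is regular, $\bm{A}=\bm{0}$, contradicting $\lambda>\mathbb{0}$; so $\mu>\mathbb{0}$ and the inequality $\bm{A}\bm{x}\leq\mu\bm{x}$ rescales to $(\mu^{-1}\bm{A})\bm{x}\leq\bm{x}$. Theorem~\ref{T-Axleqx} then forces $\mathop\mathrm{Tr}(\mu^{-1}\bm{A})\leq\mathbb{1}$, that is $\mathop\mathrm{tr}(\bm{A}^{m})\leq\mu^{m}$ for every $m=1,\dots,n$. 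Taking $m$-th roots and the idempotent sum over $m$, and invoking the spectral radius formula \eqref{E-lambda-tr1mAm} (valid also in the reducible case, as noted after that display), yields $\lambda\leq\mu$.

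For attainment and the description of all optima, I note that the definition of the spectral radius gives $\mathop\mathrm{tr}(\bm{A}^{m})\leq\lambda^{m}$, hence $\mathop\mathrm{Tr}(\lambda^{-1}\bm{A})\leq\mathbb{1}$, so Theorem~\ref{T-Axleqx} applies to $(\lambda^{-1}\bm{A})\bm{x}\leq\bm{x}$ and supplies exactly the family $\bm{x}=(\lambda^{-1}\bm{A})^{\ast}\bm{u}$ with $\bm{u}$ regular as its regular solutions. Any such $\bm{x}$ satisfies $\bm{A}\bm{x}\leq\lambda\bm{x}$; left-multiplying by $\bm{x}^{-}$ gives $\bm{x}^{-}\bm{A}\bm{x}\leq\lambda$, which together with the lower bound yields equality, so $\lambda$ is attained. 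Conversely, any regular minimizer $\bm{x}$ satisfies $\bm{x}^{-}\bm{A}\bm{x}=\lambda$ and hence $(\lambda^{-1}\bm{A})\bm{x}\leq\bm{x}$, so the parametrization of Theorem~\ref{T-Axleqx} captures it.

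The only delicate point I anticipate is the appeal to \eqref{E-lambda-tr1mAm} in the reducible case; once that is granted, everything else is a direct consequence of Theorem~\ref{T-Axleqx} combined with the simple identity $\bm{x}\bm{x}^{-}\geq\bm{I}$.
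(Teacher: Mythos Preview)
The paper does not prove Lemma~\ref{L-minxAx}; it is stated as a known result from \cite{Krivulin2015Extremal,Krivulin2015Amultidimensional}, so there is no in-paper proof to compare against. Your argument is correct and is essentially the standard one: the equivalence, for regular $\bm{x}$, between $\bm{x}^{-}\bm{A}\bm{x}\leq\mu$ and $\bm{A}\bm{x}\leq\mu\bm{x}$ (one direction via $\bm{x}\bm{x}^{-}\geq\bm{I}$, the other via $\bm{x}^{-}\bm{x}=\mathbb{1}$) reduces both the lower bound and the description of all optima to Theorem~\ref{T-Axleqx}, and formula \eqref{E-lambda-tr1mAm} then identifies $\lambda$ as exactly the threshold at which $\mathop\mathrm{Tr}(\mu^{-1}\bm{A})\leq\mathbb{1}$. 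Your concern about the reducible case is unnecessary: the paper explicitly notes, just after \eqref{E-lambda-tr1mAm}, that the same trace formula computes the spectral radius for an arbitrary matrix.
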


Assume now that we need to find regular solutions to the problem
\begin{equation}
\begin{aligned}
&
\text{minimize}
&&
(\bm{A}\bm{x})^{-}\bm{x}.
\end{aligned}
\label{P-minAxx}
\end{equation}

The next result describes a partial solution given in \cite{Krivulin2006Eigenvalues} to the problem when the matrix $\bm{A}$ is irreducible.
\begin{lemma}\label{L-minAxx}
Let $\bm{A}$ be an irreducible matrix with spectral radius (eigenvalue) $\lambda$. Then, the minimum value in problem \eqref{P-minAxx} is equal to $\lambda^{-1}$, and attained at the eigenvectors of the matrix $\bm{A}$, given by $\bm{x}=(\lambda^{-1}\bm{A})^{\times}\bm{u}$, where $\bm{u}$ is any regular vector.
\end{lemma}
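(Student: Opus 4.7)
The plan is to prove the lemma in two stages: first a uniform lower bound $(\bm{A}\bm{x})^{-}\bm{x}\geq\lambda^{-1}$ for every regular $\bm{x}\in\mathbb{X}^{n}$, and second the observation that this bound is attained on every eigenvector of $\bm{A}$. Together with the characterization of the eigenvectors stated in the preliminary material, the second part will supply the advertised family of minimizers and simultaneously show that the minimum value equals $\lambda^{-1}$.

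For the lower bound, I fix a regular $\bm{x}$ and set $\mu=(\bm{A}\bm{x})^{-}\bm{x}$. Writing this scalar as $\bigoplus_{i}(\bm{A}\bm{x})_{i}^{-1}x_{i}=\mu$ and using antitonicity of inversion, I read off the componentwise inequality $\bm{A}\bm{x}\geq\mu^{-1}\bm{x}$. Left-multiplying by $\bm{A}$ and iterating via monotonicity gives $\bm{A}^{m}\bm{x}\geq\mu^{-m}\bm{x}$ for every $m\geq 1$. The task is then to turn this vector inequality into a lower bound on some $\mathop\mathrm{tr}(\bm{A}^{k})$ with $1\leq k\leq n$, since the eigenvalue formula \eqref{E-lambda-tr1mAm} will then yield $\lambda\geq\mathop\mathrm{tr}\nolimits^{1/k}(\bm{A}^{k})\geq\mu^{-1}$, i.e.\ $\mu\geq\lambda^{-1}$.

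To extract such a diagonal entry, I plan to follow indices. The inequality $(\bm{A}\bm{x})_{i_{0}}\geq\mu^{-1}x_{i_{0}}$ supplies some $i_{1}$ with $a_{i_{0}i_{1}}x_{i_{1}}\geq\mu^{-1}x_{i_{0}}$; iterating from $i_{1}$ produces a sequence $i_{2},i_{3},\ldots$ which, since the index set has size $n$, must close into a cycle $i_{\ell}\to i_{\ell+1}\to\cdots\to i_{\ell+k}=i_{\ell}$ within $n$ steps. Multiplying the $k$ inequalities along this cycle, the $x_{i_{j}}$ factors cancel by regularity of $\bm{x}$, leaving a cycle product of entries of $\bm{A}$ that is $\geq\mu^{-k}$. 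This product is a single summand of $(\bm{A}^{k})_{i_{\ell}i_{\ell}}$, whence $\mathop\mathrm{tr}(\bm{A}^{k})\geq\mu^{-k}$; combined with the inequality $\mathop\mathrm{tr}(\bm{A}^{k})\leq\lambda^{k}$ noted after \eqref{E-lambda-tr1mAm}, this closes the lower bound.

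For attainment, if $\bm{x}$ is any eigenvector then $\bm{A}\bm{x}=\lambda\bm{x}$, so $(\bm{A}\bm{x})^{-}=\lambda^{-1}\bm{x}^{-}$ and therefore $(\bm{A}\bm{x})^{-}\bm{x}=\lambda^{-1}\bm{x}^{-}\bm{x}=\lambda^{-1}$ by the identity $\bm{x}^{-}\bm{x}=\mathbb{1}$. Substituting the preliminary description of the eigenvectors of an irreducible matrix as $\bm{x}=(\lambda^{-1}\bm{A})^{\times}\bm{u}$ for arbitrary regular $\bm{u}$ then yields the stated formula. The step I expect to require most care is the cycle extraction in the lower bound: passing from the vector inequality $\bm{A}\bm{x}\geq\mu^{-1}\bm{x}$ to a bound on a single diagonal entry of some $\bm{A}^{k}$ (rather than on an aggregate quantity such as $\bm{x}^{-}\bm{A}^{m}\bm{x}$) relies on tracking indices carefully and on regularity of $\bm{x}$ to cancel its coordinates; everything else reduces to monotonicity, the formula \eqref{E-lambda-tr1mAm}, and the elementary identities of conjugate transposition.
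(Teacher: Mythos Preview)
The paper does not actually prove this lemma; it is stated as a known result cited from \cite{Krivulin2006Eigenvalues}, so there is no in-paper proof to compare against. That said, your argument is correct and self-contained relative to the preliminaries the paper provides. The lower-bound step works as you describe: from $\mu=(\bm{A}\bm{x})^{-}\bm{x}$ one obtains $(\bm{A}\bm{x})_{i}\geq\mu^{-1}x_{i}$ for each $i$ (using regularity of $\bm{A}\bm{x}$, which follows from row-regularity of the irreducible $\bm{A}$ and regularity of $\bm{x}$), and the totality of the order lets you pick, for each $i_{j}$, an index $i_{j+1}$ with $a_{i_{j}i_{j+1}}x_{i_{j+1}}\geq\mu^{-1}x_{i_{j}}$; pigeonhole then closes a cycle of length $k\leq n$, the $x$-factors cancel by invertibility, and the resulting cycle weight bounds $\mathop\mathrm{tr}(\bm{A}^{k})$ from below by $\mu^{-k}$, whence $\lambda\geq\mu^{-1}$ via \eqref{E-lambda-tr1mAm}. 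The attainment step is immediate from $\bm{A}\bm{x}=\lambda\bm{x}$ and $\bm{x}^{-}\bm{x}=\mathbb{1}$, together with the description of eigenvectors of an irreducible matrix recalled in Section~\ref{S-PDR}. One minor remark: the intermediate claim $\bm{A}^{m}\bm{x}\geq\mu^{-m}\bm{x}$ that you mention is not actually needed for the cycle argument and can be omitted from the write-up.
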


Finally, we consider the problem with a composite objective function that uses the objective functions in \eqref{P-minxAx} and \eqref{P-minAxx} as components. The problem is formulated to find regular vectors $\bm{x}$ that 
\begin{equation}
\begin{aligned}
&
\text{minimize}
&&
\bm{x}^{-}\bm{A}\bm{x}
\oplus
(\bm{A}\bm{x})^{-}\bm{x}.
\end{aligned}
\label{P-minxAxAxx}
\end{equation}

A solution to the problem for irreducible matrices $\bm{A}$ is provided by the following statement \cite{Krivulin2006Eigenvalues}.
\begin{lemma}\label{L-minxAxAxx}
Let $\bm{A}$ be an irreducible matrix with spectral radius (eigenvalue) $\lambda$. Then, the minimum value in problem \eqref{P-minxAxAxx} is equal to $\lambda\oplus\lambda^{-1}$, and attained at the eigenvectors of the matrix $\bm{A}$, given by $\bm{x}=(\lambda^{-1}\bm{A})^{\times}\bm{u}$, where $\bm{u}$ is any regular vector.
\end{lemma}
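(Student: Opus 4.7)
The plan is to establish the optimal value $\lambda\oplus\lambda^{-1}$ by a simple sandwich: a lower bound obtained by invoking the two component lemmas separately, and a matching upper bound obtained by evaluating the composite objective on any eigenvector of $\bm{A}$.

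For the lower bound, I would argue as follows. Fix any regular $\bm{x}\in\mathbb{X}^{n}$. Lemma~\ref{L-minxAx} (which requires only that the spectral radius be non-zero, and this holds automatically since $\bm{A}$ is irreducible) yields $\bm{x}^{-}\bm{A}\bm{x}\geq\lambda$. Lemma~\ref{L-minAxx} (which requires irreducibility of $\bm{A}$, our standing hypothesis) yields $(\bm{A}\bm{x})^{-}\bm{x}\geq\lambda^{-1}$. Because idempotent addition coincides with the $\leq$-supremum, the inequality $a\oplus b\geq a$ and $a\oplus b\geq b$ holds for all $a,b\in\mathbb{X}$, so the composite objective is bounded below by $\lambda\oplus\lambda^{-1}$ on the entire set of regular vectors.

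For the upper bound and the form of the minimizers, I would take an arbitrary eigenvector $\bm{x}$ of $\bm{A}$ with eigenvalue $\lambda$, so that $\bm{A}\bm{x}=\lambda\bm{x}$. Substituting this into each summand and using the identities $\bm{x}^{-}\bm{x}=\mathbb{1}$ and $(\lambda\bm{x})^{-}=\lambda^{-1}\bm{x}^{-}$ recalled in Section~\ref{S-PDR}, I would compute
\begin{equation*}
\bm{x}^{-}\bm{A}\bm{x}
=
\lambda\,\bm{x}^{-}\bm{x}
=
\lambda,
\qquad
(\bm{A}\bm{x})^{-}\bm{x}
=
\lambda^{-1}\bm{x}^{-}\bm{x}
=
\lambda^{-1},
\end{equation*}
so that the composite objective evaluates to exactly $\lambda\oplus\lambda^{-1}$ at $\bm{x}$. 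Combined with the lower bound, this shows the minimum is achieved. The description of the minimizing set then follows immediately by substituting the explicit parametric formula $\bm{x}=(\lambda^{-1}\bm{A})^{\times}\bm{u}$ for the eigenvectors of an irreducible matrix, recalled in the preliminaries.

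There is no genuine obstacle: the statement is essentially a corollary obtained by \emph{max-combining} the solutions to the two component problems, together with the observation that eigenvectors are common attainers of both individual minima. The only point that deserves care is the first one, namely confirming that no regular vector can do better than $\lambda\oplus\lambda^{-1}$; this is handled entirely by quoting Lemmas~\ref{L-minxAx} and \ref{L-minAxx} and using the monotonicity of idempotent addition, and does not require any new computation.
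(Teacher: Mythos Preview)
Your proof is correct. The paper does not actually supply its own proof of this lemma---it is quoted as a known result from \cite{Krivulin2006Eigenvalues}---but your argument matches precisely the strategy the paper later uses to prove the reducible generalization (Theorem~\ref{T-minxAxAxx}): bound the composite objective below by combining the component bounds from Lemmas~\ref{L-minxAx} and \ref{L-minAxx}, then verify the bound is attained at eigenvectors.
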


In the subsequent sections, we extend results obtained for problems \eqref{P-minAxx} and \eqref{P-minxAxAxx} with component and composite objective functions to reducible matrices, and offer complete solutions to the problems. We start with a statement that determines the minimum in problem \eqref{P-minAxx} with an arbitrary matrix $\bm{A}$, reduces the problem to a vector inequality, and offers an explicit representation of a subset of solutions. Furthermore, to derive a complete solution to problem \eqref{P-minAxx}, we use a matrix sparsification technique to find all solutions of the inequality in the form of a family of subsets, each of which is given by a sparsified matrix obtained from $\bm{A}$. We suggest a procedure that reduces the brute-force generation of sparsified matrices by skipping those, which do not lead to solutions. Finally, we show how to represent the complete solution of \eqref{P-minAxx} in a compact vector form.

The techniques developed and results obtained for problem \eqref{P-minAxx} are then extended to solve problem \eqref{P-minxAxAxx} in a similar way.

\section{Optimization problem with component objective function}
\label{S-OPCOF}

We now turn to our new results to extend the solution of Lemma~\ref{L-minAxx} to arbitrary matrices, and then to obtain two useful consequences. For simplicity, we concentrate on the matrices in refined block-triangular form \eqref{E-MNFr}, which have no zero rows. The case of matrices with zero rows follows the same arguments with minor technical modifications.

\begin{theorem}
\label{T-minAxx}
Let $\bm{A}$ be a matrix in the refined block-triangular normal form \eqref{E-MNFr}, where the diagonal block $\bm{A}_{11}$ has eigenvalue $\lambda_{1}>\mathbb{0}$. 

Then, the minimum value in problem \eqref{P-minAxx} is equal to $\lambda_{1}^{-1}$, and all regular solutions are characterized by the inequality
\begin{equation}
\bm{x}
\leq
\lambda_{1}^{-1}\bm{A}\bm{x}.
\label{I-xleqlambda1Ax}
\end{equation}

Specifically, any block vector $\bm{x}^{T}=(\bm{x}_{1}^{T},\ldots,\bm{x}_{s}^{T})$ with the blocks $\bm{x}_{i}$ of order $n_{i}$, defined successively for each $i=1,\ldots,s$ by the conditions 
\begin{equation*}
\bm{x}_{i}
=
\begin{cases}
(\lambda_{i}^{-1}\bm{A}_{ii})^{\times}\bm{u}_{i},
&
\text{if $\lambda_{i}\geq\lambda_{1}$};
\\
\lambda_{1}^{-1}(\lambda_{1}^{-1}\bm{A}_{ii})^{\ast}\displaystyle\bigoplus_{j=1}^{i-1}\bm{A}_{ij}\bm{x}_{j},
&
\text{if $\lambda_{i}<\lambda_{1}$};
\end{cases}
\end{equation*}
where $\bm{u}_{i}$ are regular vectors of appropriate size, is a solution of the problem.
\end{theorem}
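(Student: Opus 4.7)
The plan is to split the argument into three pieces: establish the lower bound $\lambda_{1}^{-1}$ on the objective over regular vectors; identify the characterizing inequality \eqref{I-xleqlambda1Ax}; and verify that the proposed recursive block formula produces a regular vector satisfying that inequality.

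For the lower bound I would exploit the first block row of the refined normal form \eqref{E-MNFr}, which has only $\bm{A}_{11}$ as a non-zero block. Decomposing $\bm{x}$ into blocks gives $(\bm{A}\bm{x})^{-}\bm{x}=\bigoplus_{i=1}^{s}((\bm{A}\bm{x})_{i})^{-}\bm{x}_{i}$, and the first summand equals $(\bm{A}_{11}\bm{x}_{1})^{-}\bm{x}_{1}$. Since $\bm{A}_{11}$ is irreducible with eigenvalue $\lambda_{1}>\mathbb{0}$, Lemma~\ref{L-minAxx} bounds this summand below by $\lambda_{1}^{-1}$, hence the whole objective is at least $\lambda_{1}^{-1}$. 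The equivalence between attaining this value and inequality \eqref{I-xleqlambda1Ax} comes from the elementary fact that $\bm{y}^{-}\bm{x}\leq\mu$ iff $\bm{x}\leq\mu\bm{y}$ for regular $\bm{x}$ and non-zero $\bm{y}$ (a componentwise application of Lemma~\ref{L-Axleqd}); specializing to $\bm{y}=\bm{A}\bm{x}$ and $\mu=\lambda_{1}^{-1}$ characterizes the whole set of minimizers.

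For the recursive construction I would check block by block that $\bm{x}_{i}\leq\lambda_{1}^{-1}(\bm{A}\bm{x})_{i}$. When $\lambda_{i}\geq\lambda_{1}$, the vector $\bm{x}_{i}=(\lambda_{i}^{-1}\bm{A}_{ii})^{\times}\bm{u}_{i}$ is an eigenvector of $\bm{A}_{ii}$ at eigenvalue $\lambda_{i}$, so $(\bm{A}\bm{x})_{i}\geq\bm{A}_{ii}\bm{x}_{i}=\lambda_{i}\bm{x}_{i}\geq\lambda_{1}\bm{x}_{i}$. When $\lambda_{i}<\lambda_{1}$, which forces $i>r$ by the ordering in \eqref{E-MNFr}, I set $\bm{b}_{i}=\bigoplus_{j<i}\bm{A}_{ij}\bm{x}_{j}$ and $\bm{M}=\lambda_{1}^{-1}\bm{A}_{ii}$. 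The hypothesis $\lambda_{i}<\lambda_{1}$ yields $\mathop\mathrm{Tr}(\bm{M})<\mathbb{1}$, so $\bm{M}^{\ast}$ is defined, and the short manipulation $\bm{A}_{ii}\bm{x}_{i}\oplus\bm{b}_{i}=\bm{M}^{+}\bm{b}_{i}\oplus\bm{b}_{i}=\bm{M}^{\ast}\bm{b}_{i}=\lambda_{1}\bm{x}_{i}$ shows that \eqref{I-xleqlambda1Ax} in fact holds with equality on block $i$. (Equivalently, Theorem~\ref{T-Axbeqx} applied to $\lambda_{1}^{-1}\bm{A}_{ii}\bm{x}_{i}\oplus\lambda_{1}^{-1}\bm{b}_{i}=\bm{x}_{i}$ motivates the formula directly whenever $\bm{A}_{ii}$ is irreducible.)

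The main obstacle I anticipate is ensuring regularity of the constructed $\bm{x}$, which is required for it to be an admissible solution of \eqref{P-minAxx}. For blocks with $\lambda_{i}\geq\lambda_{1}$, regularity of $\bm{x}_{i}$ follows from the irreducibility of $\bm{A}_{ii}$ and the standard form of its eigenvectors. For $\lambda_{i}<\lambda_{1}$, one uses $\bm{x}_{i}\geq\lambda_{1}^{-1}\bm{b}_{i}$ together with the no-zero-rows hypothesis on $\bm{A}$: each row in the $i$-th block row of $\bm{A}$ must have a non-zero entry either in some off-diagonal $\bm{A}_{ij}$ with $j<i$ (so that, by an induction giving regularity of $\bm{x}_{j}$, $\bm{b}_{i}$ is non-zero at that row), or in $\bm{A}_{ii}$ itself (so that irreducibility of $\bm{A}_{ii}$ combined with $\bm{M}^{\ast}\geq\bm{I}$ propagates non-zeroness along the strongly connected digraph of $\bm{A}_{ii}$ from the support of $\bm{b}_{i}$ to that row). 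Carrying out this bookkeeping cleanly across the induction is the principal technical step.
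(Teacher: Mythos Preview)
Your proposal is correct and follows essentially the same route as the paper: a block lower bound via Lemma~\ref{L-minAxx}, the recursive construction of $\bm{x}_{i}$ (eigenvector when $\lambda_{i}\geq\lambda_{1}$, solution of $\lambda_{1}^{-1}(\bm{A}_{i1}\bm{x}_{1}\oplus\cdots\oplus\bm{A}_{ii}\bm{x}_{i})=\bm{x}_{i}$ when $\lambda_{i}<\lambda_{1}$), and Lemma~\ref{L-Axleqd} to turn $(\bm{A}\bm{x})^{-}\bm{x}\leq\lambda_{1}^{-1}$ into \eqref{I-xleqlambda1Ax}. The only differences are cosmetic: the paper bounds via all diagonal blocks $i\leq r$ rather than just $i=1$, and invokes Theorem~\ref{T-Axbeqx} where you do the $\bm{M}^{+}\bm{b}_{i}\oplus\bm{b}_{i}=\bm{M}^{\ast}\bm{b}_{i}$ identity directly; your explicit treatment of regularity is more careful than the paper's, which leaves that point implicit under the no-zero-rows assumption.
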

\begin{proof}
Let $\bm{x}^{T}=(\bm{x}_{1}^{T},\ldots,\bm{x}_{s}^{T})$ be an arbitrary regular vector in block form, where $\bm{x}_{i}$ is a vector of order $n_{i}$ for each $i=1,\ldots,n$. Considering the refined block-triangular normal form of the matrix $\bm{A}$ with the condition that $\lambda_{i}\geq\lambda_{1}$ for $i\leq r$, we apply Lemma~\ref{L-minAxx} to write
\begin{equation*}
(\bm{A}\bm{x})^{-}\bm{x}
=
\bigoplus_{i=1}^{s}\left(\bigoplus_{j=1}^{i}\bm{A}_{ij}\bm{x}_{j}\right)^{-}\bm{x}_{i}
\geq
\bigoplus_{i=1}^{r}(\bm{A}_{ii}\bm{x}_{i})^{-}\bm{x}_{i}
\geq
\lambda_{1}^{-1}\oplus\cdots\oplus\lambda_{r}^{-1}
=
\lambda_{1}^{-1},
\end{equation*}
which means that $\lambda_{1}^{-1}$ is a lower bound for the objective function.

Furthermore, we show that the bound $\lambda_{1}^{-1}$ is attained at some regular vector $\bm{x}$, and hence it is the minimum in the problem. To find such a vector, we solve the inequality $(\bm{A}\bm{x})^{-}\bm{x}\leq\lambda_{1}^{-1}$, which is equivalent to the system of inequalities
\begin{equation*}
\left(\bigoplus_{j=1}^{i}\bm{A}_{ij}\bm{x}_{j}\right)^{-}\bm{x}_{i}
\leq
\lambda_{1}^{-1},
\qquad
i=1,\ldots,s.
\end{equation*}

We successively define a sequence of vectors $\bm{x}_{i}$ for $i=1,\ldots,s$. If $\lambda_{i}\geq\lambda_{1}$ we take $\bm{x}_{i}$ to be an eigenvector of $\bm{A}_{ii}$, given by the equation $\bm{A}_{ii}\bm{x}_{i}=\lambda_{i}\bm{x}_{i}$, which is solved as
\begin{equation*}
\bm{x}_{i}
=
(\lambda_{i}^{-1}\bm{A}_{ii})^{\times}\bm{u}_{i},
\end{equation*}
where $\bm{u}_{i}$ is a regular vector of appropriate size.

Note that the condition $\lambda_{i}\geq\lambda_{1}$ is fulfilled if $i\leq r$. With this condition, we have
\begin{equation*}
\left(\bigoplus_{j=1}^{i}\bm{A}_{ij}\bm{x}_{j}\right)^{-}\bm{x}_{i}
\leq
(\bm{A}_{ii}\bm{x}_{i})^{-}\bm{x}_{i}
=
\lambda_{i}^{-1}
\leq
\lambda_{1}^{-1}.
\end{equation*}

Provided that $\lambda_{i}<\lambda_{1}$, we define the vector $\bm{x}_{i}$ as a solution of the equation $\lambda_{1}^{-1}(\bm{A}_{i1}\bm{x}_{1}\oplus\cdots\oplus\bm{A}_{ii}\bm{x}_{i})=\bm{x}_{i}$. 

Since, in this case, $\mathop\mathrm{Tr}(\lambda_{1}^{-1}\bm{A}_{ii})=\lambda_{1}^{-1}\mathop\mathrm{tr}\bm{A}_{ii}\oplus\cdots\oplus\lambda_{1}^{-n}\mathop\mathrm{tr}(\bm{A}_{ii}^{n})<\mathbb{1}$, the equation is solved by Theorem~\ref{T-Axbeqx} in the form
\begin{equation*}
\bm{x}_{i}
=
\lambda_{1}^{-1}(\lambda_{1}^{-1}\bm{A}_{ii})^{\ast}\bigoplus_{j=1}^{i-1}\bm{A}_{ij}\bm{x}_{j}.
\end{equation*} 

With the solution vector $\bm{x}_{i}$, we have
\begin{equation*}
\left(\bigoplus_{j=1}^{i}\bm{A}_{ij}\bm{x}_{j}\right)^{-}\bm{x}_{i}
=
\lambda_{1}^{-1}.
\end{equation*}

Finally, the application of Lemma~\ref{L-Axleqd} to solve the inequality $(\bm{A}\bm{x})^{-}\bm{x}\leq\lambda_{1}^{-1}$ with respect to $\bm{x}$ leads to inequality \eqref{I-xleqlambda1Ax}.
\end{proof}

We now illustrate the above result with the solution of an example problem with a reducible matrix.

\begin{example}
\label{X-rmatrix}
Consider problem \eqref{P-minAxx}, which is formulated in terms of the idempotent semifield $\mathbb{R}_{\max,+}$ with the objective function given by the matrix
\begin{equation}
\bm{A}
=
\left(
\begin{array}{ccr}
1 & \mathbb{0} & \mathbb{0}
\\
3 & 2 & \mathbb{0}
\\
\mathbb{0} & 0 & -1
\end{array}
\right),
\label{E-A100320001}
\end{equation}
where the notation $\mathbb{0}=-\infty$ is used to save writing.

Note that the matrix $\bm{A}$ is reducible, and has the block-triangular form \eqref{E-MNFr} with the blocks defined by $(1\times1)$-matrices as
\begin{equation*}
\bm{A}_{11}
=
(1),
\quad
\bm{A}_{21}
=
(3),
\quad
\bm{A}_{22}
=
(2),
\quad
\bm{A}_{31}
=
(\mathbb{0}),
\quad
\bm{A}_{32}
=
(0),
\quad
\bm{A}_{33}
=
(-1).
\end{equation*}

The eigenvalues of diagonal blocks $\bm{A}_{11}$, $\bm{A}_{22}$ and $\bm{A}_{33}$ are easily found to be $\lambda_{1}=1$, $\lambda_{2}=2$ and $\lambda_{3}=-1$, whereas their corresponding eigenvectors take the form of arbitrary reals.

By Theorem~\ref{T-minAxx}, the minimum in the problem is equal to $\lambda_{1}^{-1}=-1$. The solution offered by the theorem is given by the vector $\bm{x}=(x_{1},x_{2},x_{3})^{T}$, where $x_{1}=u_{1}$ and $x_{2}=u_{2}$ for any $u_{1},u_{2}\in\mathbb{R}$.

Furthermore, we have $x_{3}=\lambda_{1}^{-1}(\lambda_{1}^{-1}\bm{A}_{33})^{\ast}\bm{A}_{32}x_{2}=(-1)x_{2}=(-1)u_{2}$, or, in the usual notation, $x_{3}=u_{2}-1$. 

In vector form, with the notation $\bm{u}=(u_{1},u_{2})^{T}$, the solution becomes
\begin{equation*}
\bm{x}
=
\left(
\begin{array}{cr}
0 & \mathbb{0}
\\
\mathbb{0} & 0
\\
\mathbb{0} & -1
\end{array}
\right)
\bm{u},
\qquad
\bm{u}\in\mathbb{R}^{2}.
\qed
\end{equation*}
\end{example}

We now consider a special case of the problem, where the partial solution given by the previous theorem takes a more compact form.

\begin{corollary}
Under the conditions of Theorem~\ref{T-minAxx}, if $\lambda_{1}\leq\lambda_{i}$ for all $i=1,\ldots,s$, then the vector
$$
\bm{x}
=
\bm{D}\bm{u},
\qquad
\bm{D}
=
\left(
\begin{array}{ccc}
(\lambda_{1}^{-1}\bm{A}_{11})^{\times} & & \bm{0}
\\
& \ddots &
\\
\bm{0} & & (\lambda_{s}^{-1}\bm{A}_{ss})^{\times}
\end{array}
\right),
$$
where $\bm{u}$ is any regular vector of appropriate size, is a solution of the problem.
\end{corollary}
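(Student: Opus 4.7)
The plan is to specialize Theorem~\ref{T-minAxx} to the case where every block falls into the first branch of the piecewise construction. Under the hypothesis $\lambda_{1}\leq\lambda_{i}$ for every $i=1,\ldots,s$, the condition $\lambda_{i}\geq\lambda_{1}$ is satisfied for all $i$, so the theorem instructs us to set $\bm{x}_{i}=(\lambda_{i}^{-1}\bm{A}_{ii})^{\times}\bm{u}_{i}$ for each $i$, where $\bm{u}_{i}$ is an arbitrary regular vector of appropriate size. The crucial structural feature of this branch --- and what makes the compact form possible --- is that $\bm{x}_{i}$ depends only on its own free parameter $\bm{u}_{i}$, and not on any of the previously constructed blocks $\bm{x}_{1},\ldots,\bm{x}_{i-1}$.

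As a consequence, I would stack the free parameters $\bm{u}_{1},\ldots,\bm{u}_{s}$ into a single block vector $\bm{u}$ and assemble the diagonal operators $(\lambda_{i}^{-1}\bm{A}_{ii})^{\times}$ into the block-diagonal matrix $\bm{D}$ displayed in the statement. The equality $\bm{x}=\bm{D}\bm{u}$ is then nothing more than a rewriting of the componentwise formulas $\bm{x}_{i}=(\lambda_{i}^{-1}\bm{A}_{ii})^{\times}\bm{u}_{i}$ as one matrix--vector product. Theorem~\ref{T-minAxx} immediately guarantees that the resulting $\bm{x}$ is a solution of problem~\eqref{P-minAxx}, and the proof is done.

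There is no real obstacle here. The only point that warrants a brief verification is that $(\lambda_{i}^{-1}\bm{A}_{ii})^{\times}$ is well defined for each $i$: this follows from $\lambda_{i}>\mathbb{0}$ (since $\lambda_{i}\geq\lambda_{1}>\mathbb{0}$ forces the block $\bm{A}_{ii}$ to be irreducible rather than zero in the refined normal form~\eqref{E-MNFr}) together with the inequality $\mathop\mathrm{tr}(\bm{A}_{ii}^{m})\leq\lambda_{i}^{m}$ obtained from~\eqref{E-lambda-tr1mAm}, which yields $\mathop\mathrm{Tr}(\lambda_{i}^{-1}\bm{A}_{ii})\leq\mathbb{1}$, the hypothesis required for the Kleene-plus construction underlying the operator $(\cdot)^{\times}$.
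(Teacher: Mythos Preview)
Your proposal is correct and follows essentially the same route as the paper: specialize Theorem~\ref{T-minAxx} so that every block falls into the first branch $\bm{x}_{i}=(\lambda_{i}^{-1}\bm{A}_{ii})^{\times}\bm{u}_{i}$, then assemble the block-diagonal matrix $\bm{D}$ and the stacked vector $\bm{u}$. Your additional remark on the well-definedness of $(\lambda_{i}^{-1}\bm{A}_{ii})^{\times}$ is a harmless (and welcome) extra check that the paper omits.
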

\begin{proof}
It follows from Theorem~\ref{T-minAxx} that the vector $\bm{x}^{T}=(\bm{x}_{1}^{T},\ldots,\bm{x}_{s}^{T})$, which has, for all $i=1,\ldots,s$, the blocks $\bm{x}_{i}=(\lambda_{i}^{-1}\bm{A}_{ii})^{\times}\bm{u}_{i}$, where $\bm{u}_{i}$ are regular vectors of appropriate size, is a solution of the problem.

It remains to introduce the block vector $\bm{u}^{T}=(\bm{u}_{1}^{T},\ldots,\bm{u}_{s}^{T})$ and the block-diagonal matrix $\bm{D}=\mathop\mathrm{diag}((\lambda_{1}^{-1}\bm{A}_{11})^{\times},\ldots,(\lambda_{s}^{-1}\bm{A}_{ss})^{\times})$ to finish the proof.
\end{proof}

The next result shows a useful property of the solutions of problem \eqref{P-minAxx}.
\begin{corollary}
\label{C-CuVASM}
Under the conditions of Theorem~\ref{T-minAxx}, the set of solution vectors of problem \eqref{P-minAxx} is closed under vector addition and scalar multiplication.
\end{corollary}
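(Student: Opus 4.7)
The plan is to reduce the statement to the characterization of solutions as the regular vectors satisfying the inequality $\bm{x}\leq\lambda_{1}^{-1}\bm{A}\bm{x}$, which was established in Theorem~\ref{T-minAxx}. Once the solution set is identified with this inequality (intersected with the set of regular vectors), closure under the two operations becomes a short monotonicity argument.

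For scalar multiplication, I would take a solution $\bm{x}$, so that $\bm{x}\leq\lambda_{1}^{-1}\bm{A}\bm{x}$, and pick any nonzero scalar $c\in\mathbb{X}$. Using that scalar multiplication is monotone and commutes with the matrix action, I would multiply the inequality through by $c$ to obtain
\begin{equation*}
c\bm{x}
\leq
c\lambda_{1}^{-1}\bm{A}\bm{x}
=
\lambda_{1}^{-1}\bm{A}(c\bm{x}),
\end{equation*}
which is the same inequality in the vector $c\bm{x}$; moreover, multiplying a regular vector by a nonzero scalar yields a regular vector, so $c\bm{x}$ lies in the solution set.

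For vector addition, I would take two solutions $\bm{x}$ and $\bm{y}$ and apply the monotonicity of idempotent addition together with distributivity of matrix multiplication over $\oplus$. From $\bm{x}\leq\lambda_{1}^{-1}\bm{A}\bm{x}$ and $\bm{y}\leq\lambda_{1}^{-1}\bm{A}\bm{y}$ I would conclude
\begin{equation*}
\bm{x}\oplus\bm{y}
\leq
\lambda_{1}^{-1}\bm{A}\bm{x}\oplus\lambda_{1}^{-1}\bm{A}\bm{y}
=
\lambda_{1}^{-1}\bm{A}(\bm{x}\oplus\bm{y}),
\end{equation*}
so $\bm{x}\oplus\bm{y}$ satisfies the defining inequality. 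Since the entrywise $\oplus$ of two regular vectors is again regular, $\bm{x}\oplus\bm{y}$ belongs to the solution set.

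There is no real obstacle here; the only subtlety is to remember that ``solution'' includes regularity, so one must check that neither operation destroys it, which is immediate from $\bm{x}\oplus\bm{y}\geq\bm{x}$ and from the invertibility of nonzero scalars. The whole argument reduces to invoking Theorem~\ref{T-minAxx} to replace the optimality condition by a linear-style inequality and then exploiting monotonicity of the tropical operations.
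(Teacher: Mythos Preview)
Your proposal is correct and follows essentially the same approach as the paper: both reduce to the characterization $\bm{x}\leq\lambda_{1}^{-1}\bm{A}\bm{x}$ from Theorem~\ref{T-minAxx} and then use monotonicity and distributivity. The paper handles both operations at once by taking $\bm{z}=\alpha\bm{x}\oplus\beta\bm{y}$ for arbitrary scalars $\alpha,\beta$, whereas you treat scalar multiplication and addition separately, but the content is the same.
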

\begin{proof}
Suppose that vectors $\bm{x}$ and $\bm{y}$ are solutions of the problem, which implies, by Theorem~\ref{T-minAxx}, that $\bm{x}\leq\lambda_{1}^{-1}\bm{A}\bm{x}$ and $\bm{y}\leq\lambda_{1}^{-1}\bm{A}\bm{y}$. We take arbitrary scalars $\alpha$ and $\beta$, and consider the vector $\bm{z}=\alpha\bm{x}\oplus\beta\bm{y}$. Since 
$$
\bm{z}
=
\alpha\bm{x}\oplus\beta\bm{y}
\leq
\alpha\lambda_{1}^{-1}\bm{A}\bm{x}\oplus\beta\lambda_{1}^{-1}\bm{A}\bm{y}
=
\lambda_{1}^{-1}\bm{A}(\alpha\bm{x}\oplus\beta\bm{y})
=
\lambda_{1}^{-1}\bm{A}\bm{z},
$$
the vector $\bm{z}$ is a solution of the problem, which proves the statement.
\end{proof}

\subsection{Derivation of complete solution}

It follows from the results from the previous section that, under the assumptions of Theorem~\ref{T-minAxx}, all solutions of problem \eqref{P-minAxx} are given by inequality \eqref{I-xleqlambda1Ax}. Below, we derive all solutions of the inequality in the form of a family of solution sets, each defined by means of sparsification of the matrix $\bm{A}$.

\begin{theorem}
\label{T-xlambda1Ax}
Let $\bm{A}$ be a matrix in the refined block-triangular normal form \eqref{E-MNFr}, where the diagonal block $\bm{A}_{11}$ has eigenvalue $\lambda_{1}>\mathbb{0}$.

Denote by $\mathcal{A}$ the set of matrices $\bm{A}_{k}$ that are obtained from $\bm{A}$ by fixing one non-zero entry in each row and by setting the others to $\mathbb{0}$, and that satisfy the condition $\mathop\mathrm{Tr}(\bm{B}_{k})\leq\mathbb{1}$, where $\bm{B}_{k}=\bm{A}_{k}^{-}(\bm{A}\oplus\lambda_{1}\bm{I})$.

Then, all regular solutions of inequality \eqref{I-xleqlambda1Ax} are given by the conditions
\begin{equation}
\bm{x}
=
\bm{B}_{k}^{\ast}\bm{u},
\qquad
\bm{B}_{k}
=
\bm{A}_{k}^{-}(\bm{A}\oplus\lambda_{1}\bm{I}),
\qquad
\bm{A}_{k}
\in
\mathcal{A},
\qquad
\bm{u}
>
\bm{0}.
\label{C-xAkAIu-u0-AkA}
\end{equation}
\end{theorem}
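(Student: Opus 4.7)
My plan is to prove the identity by double inclusion, exploiting throughout the structural fact that any $\bm{A}_{k}$ obtained from $\bm{A}$ by fixing exactly one nonzero entry per row is row-regular with exactly one nonzero in each row, so by the preliminaries it satisfies both $\bm{A}_{k}^{-}\bm{A}_{k}\leq\bm{I}$ and $\bm{A}_{k}\bm{A}_{k}^{-}\geq\bm{I}$. In addition, $\bm{A}_{k}\leq\bm{A}\leq\bm{A}\oplus\lambda_{1}\bm{I}$ holds entrywise. These three inequalities, combined with Theorem~\ref{T-Axleqx} applied to $\bm{B}_{k}=\bm{A}_{k}^{-}(\bm{A}\oplus\lambda_{1}\bm{I})$, will drive each direction.

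For the forward direction, I would fix $\bm{A}_{k}\in\mathcal{A}$ and a regular vector $\bm{u}$, set $\bm{x}=\bm{B}_{k}^{\ast}\bm{u}$, and observe that $\bm{B}_{k}^{\ast}\geq\bm{I}$ makes $\bm{x}$ regular. Since $\mathop\mathrm{Tr}(\bm{B}_{k})\leq\mathbb{1}$, Theorem~\ref{T-Axleqx} gives $\bm{B}_{k}\bm{x}\leq\bm{x}$, i.e.\ $\bm{A}_{k}^{-}(\bm{A}\oplus\lambda_{1}\bm{I})\bm{x}\leq\bm{x}$. Pre-multiplying by $\bm{A}_{k}$ and using $\bm{A}_{k}\bm{A}_{k}^{-}\geq\bm{I}$ together with monotonicity yields
\begin{equation*}
(\bm{A}\oplus\lambda_{1}\bm{I})\bm{x}
\leq
\bm{A}_{k}\bm{A}_{k}^{-}(\bm{A}\oplus\lambda_{1}\bm{I})\bm{x}
\leq
\bm{A}_{k}\bm{x}
\leq
\bm{A}\bm{x},
\end{equation*}
so in particular $\lambda_{1}\bm{x}\leq\bm{A}\bm{x}$, which is inequality~\eqref{I-xleqlambda1Ax}.

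For the converse, I would start with an arbitrary regular solution $\bm{x}$ of \eqref{I-xleqlambda1Ax}, equivalently $(\bm{A}\oplus\lambda_{1}\bm{I})\bm{x}=\bm{A}\bm{x}$. Row-regularity of $\bm{A}$ and regularity of $\bm{x}$ let me select, for each row $i$, an index $k(i)$ at which $a_{i,k(i)}x_{k(i)}$ attains the maximum $(\bm{A}\bm{x})_{i}$; I would then assemble $\bm{A}_{k}$ by retaining exactly these entries and zeroing the rest. A direct check gives $\bm{A}_{k}\bm{x}=\bm{A}\bm{x}=(\bm{A}\oplus\lambda_{1}\bm{I})\bm{x}$, whence $\bm{B}_{k}\bm{x}=\bm{A}_{k}^{-}\bm{A}_{k}\bm{x}\leq\bm{x}$ by $\bm{A}_{k}^{-}\bm{A}_{k}\leq\bm{I}$. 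The main subtlety I anticipate is confirming $\bm{A}_{k}\in\mathcal{A}$, that is, $\mathop\mathrm{Tr}(\bm{B}_{k})\leq\mathbb{1}$; but this is obtained indirectly via the contrapositive of the second case of Theorem~\ref{T-Axleqx}, since the regular solution $\bm{x}$ of $\bm{B}_{k}\bm{x}\leq\bm{x}$ just constructed rules out $\mathop\mathrm{Tr}(\bm{B}_{k})>\mathbb{1}$. Invoking the first case of the same theorem then represents $\bm{x}=\bm{B}_{k}^{\ast}\bm{u}$ for a regular vector $\bm{u}$ (concretely one can take $\bm{u}=\bm{x}$, since $\bm{B}_{k}^{\ast}\bm{x}\geq\bm{x}$ from $\bm{B}_{k}^{\ast}\geq\bm{I}$, while $\bm{B}_{k}^{m}\bm{x}\leq\bm{x}$ for every $m\geq0$ supplies the reverse inequality).
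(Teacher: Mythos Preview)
Your proof is correct and follows essentially the same approach as the paper: in both directions the key is the sparsified matrix $\bm{A}_{k}$ built from the argmax indices together with the identities $\bm{A}_{k}^{-}\bm{A}_{k}\leq\bm{I}$, $\bm{A}_{k}\bm{A}_{k}^{-}\geq\bm{I}$, and an appeal to Theorem~\ref{T-Axleqx}. Your execution of the direction ``\eqref{C-xAkAIu-u0-AkA} $\Rightarrow$ \eqref{I-xleqlambda1Ax}'' (premultiplying $\bm{B}_{k}\bm{x}\leq\bm{x}$ by $\bm{A}_{k}$ and sandwiching with $\bm{A}_{k}\bm{A}_{k}^{-}\geq\bm{I}$ and $\bm{A}_{k}\leq\bm{A}$) is a slight streamlining of the paper's argument, which instead bounds $\bm{A}\bm{B}_{k}^{\ast}\geq\lambda_{1}\bm{B}_{k}^{\ast}$ via $\bm{B}_{k}\geq\lambda_{1}\bm{A}_{k}^{-}$ and $\bm{A}\bm{A}_{k}^{-}\geq\bm{I}$.
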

\begin{proof}
First we note that, under the conditions of the theorem, regular solutions to inequality \eqref{I-xleqlambda1Ax} exist. Indeed, using similar arguments as in Theorem~\ref{T-minAxx}, one can see that the block vector $\bm{x}^{T}=(\bm{x}_{1}^{T},\ldots,\bm{x}_{s}^{T})$, where, for all $i=1,\ldots,s$, the block $\bm{x}_{i}$ is an eigenvector of the matrix $\bm{A}_{ii}$ if $\lambda_{i}\geq\mathbb{1}$, or a solution of the equation $\bm{A}_{i1}\bm{x}_{1}\oplus\cdots\oplus\bm{A}_{ii}\bm{x}_{i}=\lambda_{1}\bm{x}_{i}$ otherwise, satisfies the inequality.

To prove the theorem, we show that any regular solution of inequality \eqref{I-xleqlambda1Ax} can be represented as \eqref{C-xAkAIu-u0-AkA}, and vice versa. Assume $\bm{x}=(x_{j})$ to be a regular solution of \eqref{I-xleqlambda1Ax} with a matrix $\bm{A}=(a_{ij})$, and consider the scalar inequality
\begin{equation}
x_{p}
\leq
\lambda_{1}^{-1}a_{p1}x_{1}\oplus\cdots\oplus\lambda_{1}^{-1}a_{pn}x_{n},
\label{I-xp}
\end{equation}
which corresponds to row $p$ in the matrix $\bm{A}$.

If this inequality holds for some $x_{1},\ldots,x_{n}$, then, as the order defined by the relation $\leq$ is assumed linear, there is a term in the sum on the right-hand side that provides the maximum of the sum. Suppose that the maximum is attained at the $q$th term $\lambda_{1}^{-1}a_{pq}x_{q}$, and hence $a_{pq}>\mathbb{0}$. Under this condition, we can replace the above inequality by the two inequalities $\lambda_{1}^{-1}a_{pq}x_{q}\geq\lambda_{1}^{-1}a_{p1}x_{1}\oplus\cdots\oplus\lambda_{1}^{-1}a_{pn}x_{n}$ and $\lambda_{1}^{-1}a_{pq}x_{q}\geq x_{p}$, or, equivalently, by one inequality
\begin{equation}
\lambda_{1}^{-1}a_{pq}x_{q}
\geq
\lambda_{1}^{-1}a_{p1}x_{1}\oplus\cdots\oplus(\lambda_{1}^{-1}a_{pp}\oplus\mathbb{1})x_{p}\oplus\cdots\oplus\lambda_{1}^{-1}a_{pn}x_{n}.
\label{I-apqxq}
\end{equation}

Now assume that we determine maximum terms in all scalar inequalities in \eqref{I-xleqlambda1Ax}. Similarly as above, we replace each inequality by an inequality with the maximum term isolated on the left-hand side.

To represent these scalar inequalities in a vector form, we introduce a matrix $\bm{A}_{k}$ that is obtained from $\bm{A}$ by fixing, in each row, one entry, which corresponds to the maximum term, and by setting the other entries to $\mathbb{0}$. With this matrix $\bm{A}_{k}$, the scalar inequalities combine into the vector inequality $\lambda_{1}^{-1}\bm{A}_{k}\bm{x}\geq(\lambda_{1}^{-1}\bm{A}\oplus\bm{I})\bm{x}$.

Let us verify that the last inequality is equivalent to the inequality
\begin{equation*}
\bm{x}
\geq
\lambda_{1}\bm{A}_{k}^{-}(\lambda_{1}^{-1}\bm{A}\oplus\bm{I})\bm{x}.
\end{equation*} 

We multiply the former inequality by $\lambda_{1}\bm{A}_{k}^{-}$ on the left, and take into account that $\bm{A}_{k}^{-}\bm{A}_{k}\leq\bm{I}$ to write $\bm{x}\geq\bm{A}_{k}^{-}\bm{A}_{k}\bm{x}\geq\lambda_{1}\bm{A}_{k}^{-}(\lambda_{1}^{-1}\bm{A}\oplus\bm{I})\bm{x}$, which gives the latter one. At the same time, the multiplication of the latter inequality by $\lambda^{-1}\bm{A}_{k}$ on the left, and the condition $\bm{A}_{k}\bm{A}_{k}^{-}\geq\bm{I}$ result in the former inequality as $\lambda_{1}^{-1}\bm{A}_{k}\bm{x}\geq\bm{A}_{k}\bm{A}_{k}^{-}(\lambda_{1}^{-1}\bm{A}\oplus\bm{I})\bm{x}\geq(\lambda_{1}^{-1}\bm{A}\oplus\bm{I})\bm{x}$.

With the matrix $\bm{B}_{k}=\lambda_{1}\bm{A}_{k}^{-}(\lambda_{1}^{-1}\bm{A}\oplus\bm{I})=\bm{A}_{k}^{-}(\bm{A}\oplus\lambda_{1}\bm{I})$, the inequality obtained becomes
\begin{equation*}
\bm{x}
\geq
\bm{B}_{k}\bm{x}.
\end{equation*} 

By assumption, the last inequality has a regular solution $\bm{x}$, which, according to Theorem~\ref{T-Axleqx}, implies that the condition $\mathop\mathrm{Tr}(\bm{B}_{k})\leq\mathbb{1}$ holds. Under this condition, any regular solution is given by $\bm{x}=\bm{B}_{k}^{\ast}\bm{u}$, where $\bm{u}$ is a regular vector, which means that the vector $\bm{x}$ is represented in the form of \eqref{C-xAkAIu-u0-AkA}.

Now suppose that a vector $\bm{x}$ is defined by the conditions at \eqref{C-xAkAIu-u0-AkA}. To verify that $\bm{x}$ satisfies \eqref{I-xleqlambda1Ax}, we first use the condition $\mathop\mathrm{Tr}(\bm{B}_{k})\leq\mathbb{1}$ which entails $\bm{B}_{k}^{+}=\bm{B}_{k}\bm{B}_{k}^{\ast}\leq\bm{B}_{k}^{\ast}$. Next, we note that $\bm{B}_{k}=\bm{A}_{k}^{-}(\bm{A}\oplus\lambda_{1}\bm{I})\geq\lambda_{1}\bm{A}_{k}^{-}$. Considering that from $\bm{A}\geq\bm{A}_{k}$ it follows that $\bm{A}\bm{A}_{k}^{-}\geq\bm{A}_{k}\bm{A}_{k}^{-}\geq\bm{I}$, we write
\begin{equation*}
\bm{A}\bm{B}_{k}^{\ast}
\geq
\bm{A}\bm{B}_{k}^{+}
=
\bm{A}\bm{B}_{k}\bm{B}_{k}^{\ast}
\geq
\lambda_{1}\bm{A}\bm{A}_{k}^{-}\bm{B}_{k}^{\ast}
\geq
\lambda_{1}\bm{B}_{k}^{\ast}.
\end{equation*}

In this case, we have
\begin{equation*}
\lambda_{1}^{-1}\bm{A}\bm{x}
=
\lambda_{1}^{-1}\bm{A}\bm{B}_{k}^{\ast}\bm{u}
\geq
\lambda_{1}^{-1}\lambda_{1}\bm{B}_{k}^{\ast}\bm{u}
=
\bm{B}_{k}^{\ast}\bm{u}
=
\bm{x},
\end{equation*}
and thus conclude that the vector $\bm{x}$ satisfies inequality \eqref{I-xleqlambda1Ax}.
\end{proof}

\subsection{Backtracking procedure for generating solution sets}

Note that, although the generation of the sparsified matrices in $\mathcal{A}$ according to the solution described above is quite a simple task, the number of the matrices in practical problems may be excessively large. Below, we propose a backtracking procedure that allows to reduce the number of matrices under examination. 

The procedure successively checks rows $i=1,\ldots,n$ of the matrix $\bm{A}$ to find and fix one non-zero entry $a_{ij}$ for $j=1,\ldots,n$, and to set the other entries to zero. On selection of an entry in a row, we examine the remaining rows to modify their non-zero entries by setting to $\mathbb{0}$, provided that these entries do not affect the current solution. One step of the procedure is completed when a non-zero entry is fixed in the last row, and hence a sparsified matrix is fully defined.

To prepare the next step, we take the next non-zero entry in the row, provided that such an entry exists. If there is no non-zero entries left in the row, the procedure has to go back to the previous row. It cancels the last selection of non-zero entry, and rolls back the modifications made to the matrix in accordance with the selection. Then, the procedure fixes the next non-zero entry in this row if it exists, or continues back to the previous rows until a new unexplored non-zero entry is found, otherwise. If the new entry is fixed in a row, the procedure continues forward to fix non-zero entries in the next rows, and to modify the remaining rows. The procedure is completed when no more non-zero entries can be selected in the first row. We represent the procedure in recursive form in Algorithm~\ref{A-GenerateSparseMatrices}. 

To describe the modification routine implemented in the procedure in more detail, assume that there are non-zero entries fixed in rows $i=1,\ldots,p-1$, and we now select the entry $a_{pq}$ in row $p$. Since this selection implies that $\lambda_{1}^{-1}a_{pq}x_{q}$ is considered the maximum term in the right-hand side of inequality \eqref{I-xp}, it follows from \eqref{I-apqxq} that
\begin{equation*}
x_{q}\geq a_{pq}^{-1}(a_{pp}\oplus\lambda_{1})x_{p},
\qquad
x_{q}\geq a_{pq}^{-1}a_{pj}x_{j},
\quad
j\ne p.
\end{equation*}

Let us examine the inequality $x_{i}\leq\lambda_{1}^{-1}a_{i1}x_{1}\oplus\cdots\oplus\lambda_{1}^{-1}a_{in}x_{n}$ for $i=p+1,\ldots,n$. If the condition $\lambda_{1}^{-1}a_{iq}a_{pq}^{-1}a_{pi}\geq\mathbb{1}$ holds, then the inequality is fulfilled at the expense of its $q$th term alone, because $\lambda_{1}^{-1}a_{iq}x_{q}\geq\lambda_{1}^{-1}a_{iq}a_{pq}^{-1}a_{pi}x_{i}\geq x_{i}$. Since, in this case, the contribution of the other terms is of no concern, we can set the entries $a_{ij}$ for all $j\ne q$ to $\mathbb{0}$ without changing the solution set under construction.

Suppose that the above condition is not satisfied. Then, we can verify the conditions $a_{iq}a_{pq}^{-1}a_{pj}\geq a_{ij}$ for all $j\ne p,q$, and $a_{iq}a_{pq}^{-1}(a_{pp}\oplus\lambda_{1})\geq a_{ip}$ for $j=p$. If these conditions are satisfied for some $j\ne p$ or $j=p$, then we have $a_{iq}x_{q}\geq a_{iq}a_{pq}^{-1}a_{pj}x_{j}\geq a_{ij}x_{j}$ or $a_{iq}x_{q}\geq a_{iq}a_{pq}^{-1}(a_{pp}\oplus\lambda_{1})x_{p}\geq a_{ip}x_{p}$. This means that term $q$ dominates over term $j$. As before, considering that the last term does not affect the right-hand side of the inequality, we set $a_{ij}=\mathbb{0}$.  

\newcounter{algorithm}
\renewcommand{\thepseudocode}{\arabic{pseudocode}}
\renewcommand{\COMMENT}[1]{\mbox{\bfseries comment:}\ \mbox{#1}}
\begin{pseudocode}{GenerateSparseMatrices}{\bm{A},\mathcal{A}}\label{A-GenerateSparseMatrices}
\PROCEDURE{Backtrack}{\bm{A},p,q}
\COMMENT{Sparsify rows $i\geq p$ in the matrix $\bm{A}=(a_{ij})$}
\\
\IF p\leq n \THEN
\BEGIN
	\COMMENT{Verify whether $a_{pq}$ can be fixed in row $p$}
	\\
	\IF a_{pq}\ne\mathbb{0} \THEN
	\BEGIN
		\COMMENT{Copy $\bm{A}$ into the matrix $\bm{A}^{\prime}=(a_{ij}^{\prime})$}
		\\
		\bm{A}^{\prime} \GETS \bm{A}
		\\
		\COMMENT{Sparsify row $p$ in $\bm{A}^{\prime}$ with $a_{pq}^{\prime}$ fixed}
		\\
		\FOREACH j\ne q \DO
			a_{pj}^{\prime} \GETS \mathbb{0}
			\\	
			\COMMENT{Sparsify rows $i>p$ in $\bm{A}^{\prime}$}
			\\
			\FOR i \GETS p+1 \TO n \DO
			\BEGIN
				\IF \lambda_{1}^{-1}a_{iq}a_{pq}^{-1}a_{pi}\geq\mathbb{1} \THEN
				\BEGIN
					\FOREACH j\ne q \DO
						a_{ij}^{\prime} \GETS \mathbb{0}
				\END
				\ELSE
				\BEGIN
					\FOREACH j\ne q,p \DO
						\BEGIN
							\IF a_{iq}a_{pq}^{-1}a_{pj}\geq a_{ij} \THEN
							a_{ij}^{\prime} \GETS \mathbb{0}
						\END
						\\
						\IF a_{iq}a_{pq}^{-1}(a_{pp}\oplus\lambda_{1})\geq a_{ip} \THEN
						a_{ip}^{\prime}\gets\mathbb{0}
				\END
			\END
			\\
			\IF p=n \THEN
			\BEGIN
				\COMMENT{Store $\bm{A}^{\prime}$ if completed}
				\\
				\mathcal{A} \GETS \mathcal{A}\cup\{\bm{A}^{\prime}\}
			\END
			\ELSE
			\BEGIN
				\COMMENT{Apply recursion otherwise}
				\\
				\FOR j \GETS 1 \TO n \DO
				\CALL{Backtrack}{\bm{A}^{\prime},p+1,j} 
			\END
		\END
	\END
	\ELSE \RETURN{}
\ENDPROCEDURE
\MAIN
\COMMENT{Generate the set $\mathcal{A}$ of sparse matrices from the matrix $\bm{A}$}
\\
\GLOBAL{n,\lambda_{1},\mathcal{A}=\emptyset}
\\
\FOR j \GETS 1 \TO n \DO
	\CALL{Backtrack}{\bm{A},1,j}
\ENDMAIN	
\end{pseudocode}

\subsection{Closed-form representation of complete solution}

We conclude with the representation of the solution to problem \eqref{P-minAxx} in a compact closed form.

\begin{theorem}
\label{T-minAxx-CFR}
Let $\bm{A}$ be a matrix in the refined block-triangular normal form \eqref{E-MNFr}, where the diagonal block $\bm{A}_{11}$ has eigenvalue $\lambda_{1}>\mathbb{0}$.

Denote by $\mathcal{A}$ the set of matrices $\bm{A}_{k}$ that are obtained from $\bm{A}$ by fixing one non-zero entry in each row and by setting the others to $\mathbb{0}$, and that satisfy the condition $\mathop\mathrm{Tr}(\bm{B}_{k})\leq\mathbb{1}$, where $\bm{B}_{k}=\bm{A}_{k}^{-}(\bm{A}\oplus\lambda_{1}\bm{I})$.

Let $\bm{S}$ be the matrix, which is constituted by the maximal linear independent system of columns in the matrices $\bm{B}_{k}^{\ast}$ for all $\bm{A}_{k}\in\mathcal{A}$. 

Then, the minimum value in problem \eqref{P-minAxx} is equal to $\lambda_{1}^{-1}$, and all regular solutions are given by
$$
\bm{x}
=
\bm{S}\bm{v},
\qquad
\bm{v}
>
\bm{0}.
$$
\end{theorem}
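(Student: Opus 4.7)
The plan is to combine the vector-inequality characterization from Theorem~\ref{T-minAxx} with the family-of-subsets description from Theorem~\ref{T-xlambda1Ax}, and then to compress that family into a single matrix by extracting a maximal linearly independent system of generators. First, Theorem~\ref{T-minAxx} identifies the minimum with $\lambda_{1}^{-1}$ and reduces the determination of regular solutions to those regular $\bm{x}$ satisfying \eqref{I-xleqlambda1Ax}. Next, Theorem~\ref{T-xlambda1Ax} expresses every such $\bm{x}$ in the form $\bm{x}=\bm{B}_{k}^{\ast}\bm{u}=\bigoplus_{j}u_{j}\bm{b}_{k,j}$, where $\bm{A}_{k}\in\mathcal{A}$ and $\bm{u}>\bm{0}$, so that $\bm{x}$ is a positive-coefficient combination of the columns $\bm{b}_{k,j}$ of $\bm{B}_{k}^{\ast}$.

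For the inclusion of regular solutions into $\{\bm{S}\bm{v}:\bm{v}>\bm{0}\}$, I would expand each column $\bm{b}_{k,j}$ as a linear combination of columns of $\bm{S}$, which is possible because those columns form, by construction, a maximal linearly independent system that contains every $\bm{b}_{k,j}$ in its span. This yields an initial representation $\bm{x}=\bm{S}\bm{v}_{0}$ whose coefficient vector may have some entries equal to $\mathbb{0}$. To upgrade $\bm{v}_{0}$ to a regular $\bm{v}$ with $\bm{S}\bm{v}=\bm{x}$, for each index $l$ with $(\bm{v}_{0})_{l}=\mathbb{0}$ I would exploit the regularity of $\bm{x}$ and the non-vanishing of $\bm{s}_{l}$ (which follows from $\bm{B}_{k}^{\ast}\geq\bm{I}$) to pick $c_{l}>\mathbb{0}$ small enough that $c_{l}\bm{s}_{l}\leq\bm{x}$; then replacing $(\bm{v}_{0})_{l}$ by $c_{l}$ preserves $\bm{S}\bm{v}=\bm{x}$ by idempotency, and iterating over all zero positions produces a regular coefficient vector.

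For the reverse inclusion, I would first observe that each $\bm{s}_{l}$, being a column of some $\bm{B}_{k}^{\ast}$, satisfies $\bm{s}_{l}\leq\lambda_{1}^{-1}\bm{A}\bm{s}_{l}$ by the matrix inequality $\bm{A}\bm{B}_{k}^{\ast}\geq\lambda_{1}\bm{B}_{k}^{\ast}$ already proved inside Theorem~\ref{T-xlambda1Ax}, and then invoke the closure argument used in Corollary~\ref{C-CuVASM}, which needs only this inequality, to conclude that $\bm{S}\bm{v}$ satisfies \eqref{I-xleqlambda1Ax} for every $\bm{v}$. Regularity of $\bm{S}\bm{v}$ when $\bm{v}>\bm{0}$ will follow once $\bm{S}$ is shown to be row-regular, which I would prove by expanding the $i$th column of any $\bm{B}_{k}^{\ast}$, whose $i$th entry is at least $\mathbb{1}$, as a combination of the columns of $\bm{S}$ and noting that some $\bm{s}_{l}$ must then have $(\bm{s}_{l})_{i}>\mathbb{0}$. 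The most delicate step, I expect, is the promotion of $\bm{v}_{0}\geq\bm{0}$ to a regular $\bm{v}>\bm{0}$ in the forward direction, since this is where the strictness condition $\bm{v}>\bm{0}$ in the statement becomes substantive rather than cosmetic, and where the regularity of $\bm{x}$ has to be transferred into a genuine positivity statement about the coefficients.
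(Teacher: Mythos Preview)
Your proposal is correct and follows essentially the same route as the paper: invoke Theorem~\ref{T-minAxx} for the minimum value and the reduction to inequality~\eqref{I-xleqlambda1Ax}, apply Theorem~\ref{T-xlambda1Ax} to obtain the family $\{\bm{B}_{k}^{\ast}\bm{u}\}$, use the closure property of Corollary~\ref{C-CuVASM} to pass to the linear span of all columns of the $\bm{B}_{k}^{\ast}$, and then prune to a maximal linearly independent system to form $\bm{S}$. Your treatment is in fact more careful than the paper's own proof, which simply asserts the final representation $\bm{x}=\bm{S}\bm{v}$ with $\bm{v}>\bm{0}$ without spelling out the promotion of $\bm{v}_{0}$ to a regular $\bm{v}$ or the row-regularity of $\bm{S}$; your handling of both points is sound.
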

\begin{proof}
By Theorem~\ref{T-minAxx}, we find the minimum in the problem to be $\lambda_{1}^{-1}$, and characterize all solutions by the inequality $\bm{x}\leq\lambda_{1}^{-1}\bm{A}\bm{x}$.

According to Theorem~\ref{T-xlambda1Ax}, we define the set $\mathcal{A}$ of matrices $\bm{A}_{k}$ that are obtained from $\bm{A}$ by leaving one of non-zero entries in each row, and such that $\mathop\mathrm{Tr}(\bm{B}_{k})\leq\mathbb{1}$, where $\bm{B}_{k}=\bm{A}_{k}^{-}(\bm{A}\oplus\lambda_{1}\bm{I})$. The theorem provides a family of solutions $\bm{x}=\bm{B}_{k}^{\ast}\bm{u}$, where $\bm{u}>\bm{0}$, for all $\bm{A}_{k}\in\mathcal{A}$. 

Considering that each solution $\bm{x}=\bm{B}_{k}^{\ast}\bm{u}$ defines a subset of vectors generated by the columns of the matrix $\bm{B}_{k}^{\ast}$, we apply Corollary~\ref{C-CuVASM} to represent all solutions as the linear span of the columns in the matrices $\bm{B}_{k}^{\ast}$, corresponding to all $\bm{A}_{k}\in\mathcal{A}$. 

Finally, we reduce the set of all columns by eliminating those, which are linearly dependent on others. We take the remaining columns to form a matrix $\bm{S}$, and then write the solution as $\bm{x}=\bm{S}\bm{v}$, where $\bm{v}>\bm{0}$. 
\end{proof}

\begin{example}
\label{X-imatrix-CFS}
We now apply the results offered by Theorem~\ref{T-minAxx-CFR} to derive all solutions of the problem considered in Example~\ref{X-rmatrix}. We take the matrix $\bm{A}$ given by \eqref{E-A100320001}, and replace one non-zero entry in the second and third rows of $\bm{A}$ by $\mathbb{0}=-\infty$ to produce the sparsified matrices
\begin{gather*}
\bm{A}_{1}
=
\left(
\begin{array}{ccr}
1 & \mathbb{0} & \mathbb{0}
\\
3 & \mathbb{0} & \mathbb{0}
\\
\mathbb{0} & 0 & \mathbb{0}
\end{array}
\right),
\qquad
\bm{A}_{2}
=
\left(
\begin{array}{ccr}
1 & \mathbb{0} & \mathbb{0}
\\
\mathbb{0} & 2 & \mathbb{0}
\\
\mathbb{0} & 0 & \mathbb{0}
\end{array}
\right),
\\
\bm{A}_{3}
=
\left(
\begin{array}{ccr}
1 & \mathbb{0} & \mathbb{0}
\\
3 & \mathbb{0} & \mathbb{0}
\\
\mathbb{0} & \mathbb{0} & -1
\end{array}
\right),
\qquad
\bm{A}_{4}
=
\left(
\begin{array}{ccr}
1 & \mathbb{0} & \mathbb{0}
\\
\mathbb{0} & 2 & \mathbb{0}
\\
\mathbb{0} & \mathbb{0} & -1
\end{array}
\right).
\end{gather*}

Next, taking into account that $\lambda_{1}=1$, we calculate the matrices
\begin{gather*}
\bm{B}_{1}
=
\bm{A}_{1}^{-}(\bm{A}\oplus\lambda_{1}\bm{I})
=
\left(
\begin{array}{crc}
0 & -1 & \mathbb{0}
\\
\mathbb{0} & 0 & 1
\\
\mathbb{0} & \mathbb{0} & \mathbb{0}
\end{array}
\right),
\\
\bm{B}_{2}
=
\bm{A}_{2}^{-}(\bm{A}\oplus\lambda_{1}\bm{I})
=
\left(
\begin{array}{ccc}
0 & \mathbb{0} & \mathbb{0}
\\
1 & 0 & 1
\\
\mathbb{0} & \mathbb{0} & \mathbb{0}
\end{array}
\right),
\\
\bm{B}_{3}
=
\bm{A}_{3}^{-}(\bm{A}\oplus\lambda_{1}\bm{I})
=
\left(
\begin{array}{crc}
0 & -1 & \mathbb{0}
\\
\mathbb{0} & \mathbb{0} & \mathbb{0}
\\
\mathbb{0} & 1 & 2
\end{array}
\right),
\\
\bm{B}_{4}
=
\bm{A}_{4}^{-}(\bm{A}\oplus\lambda_{1}\bm{I})
=
\left(
\begin{array}{ccc}
0 & \mathbb{0} & \mathbb{0}
\\
1 & 0 & \mathbb{0}
\\
\mathbb{0} & 1 & 2
\end{array}
\right).
\end{gather*}

Furthermore, we obtain $\mathop\mathrm{Tr}(\bm{B}_{1})=\mathop\mathrm{Tr}(\bm{B}_{2})=0=\mathbb{1}$. Since the matrices $\bm{B}_{1}$ and $\bm{B}_{2}$ satisfy the condition of the theorem, they are accepted. Considering that $\mathop\mathrm{Tr}(\bm{B}_{3})=\mathop\mathrm{Tr}(\bm{B}_{4})=2>\mathbb{1}$, the last two matrices are rejected.

To represent all solutions of the problem, we need to calculate the matrices
\begin{gather*}
\bm{B}_{1}^{\ast}
=
\bm{I}\oplus\bm{B}_{1}\oplus\bm{B}_{1}^{2}
=
\left(
\begin{array}{crc}
0 & -1 & 0
\\
\mathbb{0} & 0 & 1
\\
\mathbb{0} & \mathbb{0} & 0
\end{array}
\right),
\\
\bm{B}_{2}^{\ast}
=
\bm{I}\oplus\bm{B}_{2}\oplus\bm{B}_{2}^{2}
=
\left(
\begin{array}{ccc}
0 & \mathbb{0} & \mathbb{0}
\\
1 & 0 & 1
\\
\mathbb{0} & \mathbb{0} & 0
\end{array}
\right),
\end{gather*}
and then examine the set of their columns.

Observing that the last two columns in the matrix $\bm{B}_{1}^{\ast}$ and the first in $\bm{B}_{2}^{\ast}$ are linear combinations of others, since
\begin{gather*}
\left(
\begin{array}{r}
-1
\\
0
\\
\mathbb{0}
\end{array}
\right)
=
(-1)
\left(
\begin{array}{c}
0
\\
1
\\
\mathbb{0}
\end{array}
\right)
=
(-1)
\left(
\begin{array}{c}
0
\\
\mathbb{0}
\\
\mathbb{0}
\end{array}
\right)
\oplus
\left(
\begin{array}{c}
\mathbb{0}
\\
0
\\
\mathbb{0}
\end{array}
\right),
\\
\left(
\begin{array}{c}
0
\\
1
\\
0
\end{array}
\right)
=
\left(
\begin{array}{c}
0
\\
\mathbb{0}
\\
\mathbb{0}
\end{array}
\right)
\oplus
\left(
\begin{array}{c}
\mathbb{0}
\\
1
\\
0
\end{array}
\right),
\end{gather*}
we can drop these columns without loss of solutions. With the matrix $\bm{S}$, which is formed by the independent columns, and the notation $\bm{v}=(v_{1},v_{2},v_{3})^{T}$, a complete solution to the problem is represented as
\begin{equation*}
\bm{x}
=
\bm{S}\bm{v},
\qquad
\bm{S}
=
\left(
\begin{array}{ccc}
0 & \mathbb{0} & \mathbb{0}
\\
\mathbb{0} & 0 & 1
\\
\mathbb{0} & \mathbb{0} & 0
\end{array}
\right),
\qquad
\bm{v}
\in
\mathbb{R}^{3}.
\end{equation*}

Note that, with $v_{3}=(-1)v_{2}$ (or, in the usual notation, $v_{3}=v_{2}-1$), the solution reduces to the one obtained in Example~\ref{X-rmatrix}. 
\qed
\end{example}

\section{Optimization problem with composite objective function}
\label{OPComOF}

In this section, we extend the solutions obtained for problem \eqref{P-minAxx} with component objective function to problem \eqref{P-minxAxAxx} with composite function.

\begin{theorem}
\label{T-minxAxAxx}
Let $\bm{A}$ be a matrix in the refined block-triangular normal form \eqref{E-MNFr}, where the diagonal block $\bm{A}_{11}$ has eigenvalue $\lambda_{1}>\mathbb{0}$.

Then, the minimum value in problem \eqref{P-minxAxAxx} is equal to
\begin{equation*}
\mu
=
\lambda_{1}\oplus\cdots\oplus\lambda_{s}\oplus\lambda_{1}^{-1},
\end{equation*}
and all regular solutions are characterized by the system of inequalities
\begin{equation}
\bm{A}\bm{x}
\leq
\mu\bm{x},
\qquad
\bm{x}
\leq
\mu\bm{A}\bm{x}.
\label{I-Axleqmux-xleqmuAx}
\end{equation}

Specifically, any block vector $\bm{x}^{T}=(\bm{x}_{1}^{T},\ldots,\bm{x}_{s}^{T})$ with the blocks $\bm{x}_{i}$ defined successively for each $i=1,\ldots,s$ by the conditions 
\begin{equation*}
\bm{x}_{i}
=
\begin{cases}
(\lambda_{i}^{-1}\bm{A}_{ii})^{\times}\bm{u}_{i},
&
\text{if $i\leq r$};
\\
\lambda_{i}^{-1}(\lambda_{i}^{-1}\bm{A}_{ii})^{\ast}\displaystyle\bigoplus_{j=1}^{i-1}\bm{A}_{ij}\bm{x}_{j}\oplus(\lambda_{i}^{-1}\bm{A}_{ii})^{\times}\bm{u}_{i},
&
\text{if $i>r$ and $\lambda_{i}\geq\lambda_{1}$};
\\
\lambda_{1}^{-1}(\lambda_{1}^{-1}\bm{A}_{ii})^{\ast}\displaystyle\bigoplus_{j=1}^{i-1}\bm{A}_{ij}\bm{x}_{j},
&
\text{if $i>r$ and $\lambda_{i}<\lambda_{1}$};
\end{cases}
\end{equation*}
where $\bm{u}_{i}$ are regular vectors of appropriate size, is a solution of the problem.
\end{theorem}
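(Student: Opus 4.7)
The proof strategy mirrors that of Theorem~\ref{T-minAxx}: first isolate a lower bound, then recast the condition ``objective value at most $\mu$'' as a vector system, and finally exhibit a block-wise construction realising this bound.

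For the lower bound, for any regular $\bm{x}$ I would apply Lemma~\ref{L-minxAx} (valid for any matrix, irreducible or not) to obtain $\bm{x}^{-}\bm{A}\bm{x} \geq \lambda = \lambda_{1}\oplus\cdots\oplus\lambda_{s}$, and reuse the opening calculation from the proof of Theorem~\ref{T-minAxx} (using $\lambda_{i}\geq\lambda_{1}$ for $i\leq r$) to get $(\bm{A}\bm{x})^{-}\bm{x} \geq \lambda_{1}^{-1}$. The idempotent sum of these two bounds is exactly $\mu$. To pass from the scalar bound on the objective to the system \eqref{I-Axleqmux-xleqmuAx}, I would observe that for regular $\bm{x}$ (and hence regular $\bm{A}\bm{x}$, since $\bm{A}$ has no zero rows), left-multiplying by $\bm{x}$ and using $\bm{x}\bm{x}^{-}\geq\bm{I}$, or by $\bm{x}^{-}$ and using $\bm{x}^{-}\bm{x}=\mathbb{1}$, gives the equivalence $\bm{x}^{-}\bm{A}\bm{x}\leq\mu \Leftrightarrow \bm{A}\bm{x}\leq\mu\bm{x}$; the analogous manipulation with $\bm{A}\bm{x}$ (or a direct application of Lemma~\ref{L-Axleqd}) yields $(\bm{A}\bm{x})^{-}\bm{x}\leq\mu \Leftrightarrow \bm{x}\leq\mu\bm{A}\bm{x}$. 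Since the objective is at most $\mu$ iff both of its summands are, these two equivalences give the characterization.

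For the explicit construction, I would verify block-by-block that the stated $\bm{x}$ satisfies both inequalities. Writing $\bm{b}_{i}=\bigoplus_{j<i}\bm{A}_{ij}\bm{x}_{j}$, the refined form \eqref{E-MNFr} ensures $\bm{b}_{i}=\bm{0}$ for $i\leq r$, so the formula produces an eigenvector satisfying $\bm{A}_{ii}\bm{x}_{i}=\lambda_{i}\bm{x}_{i}$. For $i>r$ with $\lambda_{i}\geq\lambda_{1}$, irreducibility of $\bm{A}_{ii}$ with eigenvalue $\lambda_{i}$ forces $\mathop\mathrm{Tr}(\lambda_{i}^{-1}\bm{A}_{ii})=\mathbb{1}$, so Theorem~\ref{T-Axbeqx}~(2) gives a solution of $\bm{A}_{ii}\bm{x}_{i}\oplus\bm{b}_{i}=\lambda_{i}\bm{x}_{i}$ in the stated form. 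For $i>r$ with $\lambda_{i}<\lambda_{1}$, $\mathop\mathrm{Tr}(\lambda_{1}^{-1}\bm{A}_{ii})<\mathbb{1}$, and Theorem~\ref{T-Axbeqx}~(1) delivers the unique solution of $\bm{A}_{ii}\bm{x}_{i}\oplus\bm{b}_{i}=\lambda_{1}\bm{x}_{i}$. Thus in every case $\bigoplus_{j\leq i}\bm{A}_{ij}\bm{x}_{j}=\nu_{i}\bm{x}_{i}$ for a scalar $\nu_{i}\in\{\lambda_{1},\ldots,\lambda_{s}\}$ satisfying both $\nu_{i}\leq\mu$ and $\nu_{i}^{-1}\leq\mu$ (using $\nu_{i}\geq\lambda_{1}$ in the first two cases and $\nu_{i}=\lambda_{1}$ in the third). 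The two block-row inequalities follow, and assembling them over $i$ gives \eqref{I-Axleqmux-xleqmuAx}. The main obstacle is the middle case, which is genuinely new compared with Theorem~\ref{T-minAxx}: it requires both a particular solution driven by $\bm{b}_{i}$ and a homogeneous eigenvector term scaled by $\bm{u}_{i}$, and hinges on the exact equality $\mathop\mathrm{Tr}(\lambda_{i}^{-1}\bm{A}_{ii})=\mathbb{1}$ so that Theorem~\ref{T-Axbeqx}~(2)---rather than case~(1) or case~(3)---is the relevant one to invoke.
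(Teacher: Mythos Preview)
Your proposal is correct and follows essentially the same route as the paper's own proof: obtain the lower bound $\mu$ by combining Lemma~\ref{L-minxAx} with the lower bound from Theorem~\ref{T-minAxx}, convert the two scalar inequalities $\bm{x}^{-}\bm{A}\bm{x}\leq\mu$ and $(\bm{A}\bm{x})^{-}\bm{x}\leq\mu$ into the vector system \eqref{I-Axleqmux-xleqmuAx} via Lemma~\ref{L-Axleqd}, and verify attainment block-by-block using Theorem~\ref{T-Axbeqx} in its two cases (the paper checks the resulting scalar bounds $\lambda_{i}\leq\mu$ and $\lambda_{i}^{-1}\leq\mu$ case-by-case, where you package them uniformly through your $\nu_{i}$, but the content is identical). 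Your remark that the middle case relies on $\mathop\mathrm{Tr}(\lambda_{i}^{-1}\bm{A}_{ii})=\mathbb{1}$ so that case~(2) of Theorem~\ref{T-Axbeqx} applies is exactly the distinction the paper exploits.
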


\begin{proof}
With the spectral radius of the matrix $\bm{A}$ denoted by $\lambda=\lambda_{1}\oplus\cdots\oplus\lambda_{s}$, we apply Lemma~\ref{L-minxAx} and Theorem~\ref{T-minAxx} to obtain the lower bound
\begin{equation*} 
\bm{x}^{-}\bm{A}\bm{x}
\oplus
(\bm{A}\bm{x})^{-}\bm{x}
\geq
\lambda\oplus\lambda_{1}^{-1}
=
\mu.
\end{equation*} 

To verify that $\mu$ is the strict lower bound of the objective function, and thus the minimum in \eqref{P-minxAxAxx}, we now find a vector $\bm{x}$ that attains this bound. We have to solve the inequality $\bm{x}^{-}\bm{A}\bm{x}\oplus(\bm{A}\bm{x})^{-}\bm{x}\leq\mu$, which is equivalent to the two inequalities
\begin{equation}
\bm{x}^{-}\bm{A}\bm{x}
\leq
\mu,
\qquad
(\bm{A}\bm{x})^{-}\bm{x}
\leq
\mu.
\label{I-xAxleqmu-Axxleqmu}
\end{equation}

Due to the refined block-triangular form of the matrix $\bm{A}$, we can rewrite the last inequalities as the system
\begin{gather}
\bm{x}_{i}^{-}\bm{A}_{ii}\bm{x}_{i}
\leq
\mu,
\qquad
(\bm{A}_{ii}\bm{x}_{i})^{-}\bm{x}_{i}
\leq
\mu,
\qquad
i\leq r;
\label{I-xiAiixileqmu-Aiixixileqmu}
\\
\bm{x}_{i}^{-}\bigoplus_{j=1}^{i}\bm{A}_{ij}\bm{x}_{j}
\leq
\mu,
\qquad
\left(\bigoplus_{j=1}^{i}\bm{A}_{ij}\bm{x}_{j}\right)^{-}\bm{x}_{i}
\leq
\mu,
\qquad
i>r.
\label{I-xiAijxjleqmu-Aijxjxileqmu}
\end{gather}

To find vectors $\bm{x}_{i}$ for $i=1,\ldots,s$, we first assume that $i\leq r$, and consider inequalities \eqref{I-xiAiixileqmu-Aiixixileqmu}. We assume $\bm{x}_{i}$ to be an eigenvector of $\bm{A}_{ii}$, given by the equation $\bm{A}_{ii}\bm{x}_{i}=\lambda_{i}\bm{x}_{i}$, which is solved as
\begin{equation*}
\bm{x}_{i}
=
(\lambda_{i}^{-1}\bm{A}_{ii})^{\times}\bm{u}_{i},
\qquad
\bm{u}_{i}
>
\bm{0}.
\end{equation*}

Furthermore, since $\lambda_{i}^{-1}\leq\lambda_{1}^{-1}$ for $i\leq r$, we have
\begin{equation*}
\bm{x}_{i}^{-}\bm{A}_{ii}\bm{x}_{i}
=
\lambda_{i}
\leq
\lambda
\leq
\mu,
\qquad
(\bm{A}_{ii}\bm{x}_{i})^{-}\bm{x}_{i}
=
\lambda_{i}^{-1}
\leq
\lambda_{1}^{-1}
\leq
\mu,
\end{equation*}
which means that inequalities \eqref{I-xiAiixileqmu-Aiixixileqmu} are fulfilled.

We consider inequalities \eqref{I-xiAijxjleqmu-Aijxjxileqmu} and examine two cases. If $\lambda_{i}\geq\lambda_{1}$, we take a vector $\bm{x}_{i}$ to satisfy the equation $\bm{A}_{i1}\bm{x}_{1}\oplus\cdots\oplus\bm{A}_{ii}\bm{x}_{i}=\lambda_{i}\bm{x}_{i}$. By Theorem~\ref{T-Axbeqx}, the equation has the solution
\begin{equation*}
\bm{x}_{i}
=
\lambda_{i}^{-1}
(\lambda_{i}^{-1}\bm{A}_{ii})^{\ast}
\bigoplus_{j=1}^{i-1}\bm{A}_{ij}\bm{x}_{j}
\oplus
(\lambda_{i}^{-1}\bm{A}_{ii})^{\times}\bm{u}_{i},
\qquad
\bm{u}_{i}
>
\bm{0}.
\end{equation*}

In this case, inequalities \eqref{I-xiAijxjleqmu-Aijxjxileqmu} hold since
\begin{equation*}
\bm{x}_{i}^{-}\bigoplus_{j=1}^{i}\bm{A}_{ij}\bm{x}_{j}
=
\lambda_{i}
\leq
\lambda
\leq
\mu,
\qquad
\left(\bigoplus_{j=1}^{i}\bm{A}_{ij}\bm{x}_{j}\right)^{-}\bm{x}_{i}
=
\lambda_{i}^{-1}
\leq
\lambda_{1}^{-1}
\leq
\mu.
\end{equation*}

Under the condition $\lambda_{i}<\lambda_{1}$, we define $\bm{x}_{i}$ as a solution of the equation $\lambda_{1}^{-1}(\bm{A}_{i1}\bm{x}_{1}\oplus\cdots\oplus\bm{A}_{ii}\bm{x}_{i})=\bm{x}_{i}$. Application of  Theorem~\ref{T-Axbeqx} leads to the result
\begin{equation*}
\bm{x}_{i}
=
\lambda_{i}^{-1}
(\lambda_{i}^{-1}\bm{A}_{ii})^{\ast}
\bigoplus_{j=1}^{i-1}\bm{A}_{ij}\bm{x}_{j}.
\end{equation*}

Substitution of the solution into inequalities \eqref{I-xiAijxjleqmu-Aijxjxileqmu} yields
\begin{equation*}
\bm{x}_{i}^{-}\bigoplus_{j=1}^{i}\bm{A}_{ij}\bm{x}_{j}
=
\lambda_{1}
\leq
\mu,
\qquad
\left(\bigoplus_{j=1}^{i}\bm{A}_{ij}\bm{x}_{j}\right)^{-}\bm{x}_{i}
=
\lambda_{1}^{-1}
\leq
\mu.
\end{equation*}
which means that these inequalities are also fulfilled.

It remains to apply Lemma~\ref{L-Axleqd} to inequalities \eqref{I-xAxleqmu-Axxleqmu} to obtain \eqref{I-Axleqmux-xleqmuAx}.
\end{proof}

\begin{example}
Consider problem \eqref{P-minxAxAxx}, where the matrix $\bm{A}$ is defined as \eqref{E-A100320001}. To apply the solution offered by Theorem~\ref{T-minxAxAxx}, we use some preliminary results from Example~\ref{X-rmatrix}.

First, we obtain the minimum value $\mu=\lambda_{1}\oplus\lambda_{2}\oplus\lambda_{3}\oplus\lambda_{1}^{-1}=2$, and define $x_{1}=u_{1}$, where $u_{1}\in\mathbb{R}$. Furthermore, observing that $\lambda_{2}>\lambda_{1}$, we write $x_{2}=\lambda_{2}^{-1}(\lambda_{2}^{-1}\bm{A}_{22})^{\ast}\bm{A}_{21}x_{1}\oplus(\lambda_{2}^{-1}\bm{A}_{22})^{\times}u_{2}=1x_{1}\oplus u_{2}=1u_{1}\oplus u_{2}$, or, in the usual notation, $x_{2}=\max(u_{1}+1,u_{2})$.

Finally, in a similar way as in Example~\ref{X-rmatrix}, we have $x_{3}=(-1)x_{2}=(-1)(1u_{1}\oplus u_{2})=u_{1}\oplus(-1)u_{2}$ (or $x_{3}=\max(u_{1},u_{2}-1)$).

Combining the results, we represent the solution in vector form as
\begin{equation*}
\bm{x}
=
\left(
\begin{array}{cr}
0 & \mathbb{0}
\\
1 & 0
\\
0 & -1
\end{array}
\right)
\bm{u},
\qquad
\bm{u}\in\mathbb{R}^{2}.
\qed
\end{equation*}
\end{example}

Next, we consider a special case of the problem, for which a complete solution can be derived as a consequence of the previous result.
\begin{corollary}
Let $\bm{A}$ be a matrix in the refined block-triangular normal form \eqref{E-MNFr}, where the diagonal blocks $\bm{A}_{ii}$ have eigenvalues $\lambda_{i}=\mathbb{1}$ for all $i=1,\ldots,s$.

Then, the minimum value in problem \eqref{P-minxAxAxx} is equal to $\mathbb{1}$, and all regular solutions are given by the condition
\begin{equation*}
\bm{x}_{i}
=
\begin{cases}
\bm{A}_{ii}^{\times}\bm{u}_{i},
&
\text{if $i\leq r$};
\\
\bm{A}_{ii}^{\ast}\displaystyle\bigoplus_{j=1}^{i-1}\bm{A}_{ij}\bm{x}_{j}\oplus\bm{A}_{ii}^{\times}\bm{u}_{i},
&
\text{if $i>r$};
\end{cases}
\end{equation*}
where $\bm{u}_{i}$ are regular vectors of appropriate size.
\end{corollary}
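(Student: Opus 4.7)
The plan is to derive the corollary as a specialization of Theorem~\ref{T-minxAxAxx} to the case $\lambda_1 = \cdots = \lambda_s = \mathbb{1}$. First I would observe that under this assumption the scalar $\mu = \lambda_1 \oplus \cdots \oplus \lambda_s \oplus \lambda_1^{-1}$ collapses to $\mathbb{1}$, so Theorem~\ref{T-minxAxAxx} immediately yields the claimed minimum value. Moreover, the characterizing system \eqref{I-Axleqmux-xleqmuAx} becomes the pair $\bm{A}\bm{x} \leq \bm{x}$ and $\bm{x} \leq \bm{A}\bm{x}$, which together are equivalent to the single fixed-point equation $\bm{A}\bm{x} = \bm{x}$. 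Thus the set of regular solutions of \eqref{P-minxAxAxx} coincides with the set of regular solutions of this equation, and the remaining task is to parametrize that set.

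Using the refined block-triangular form of $\bm{A}$, I would rewrite $\bm{A}\bm{x} = \bm{x}$ as the coupled block system $\bigoplus_{j=1}^{i}\bm{A}_{ij}\bm{x}_j = \bm{x}_i$ for $i=1,\ldots,s$, and then solve it recursively in $i$. For $i \leq r$ the refined form has no off-diagonal blocks, so the system reduces to the eigenvector equation $\bm{A}_{ii}\bm{x}_i = \bm{x}_i$; since $\bm{A}_{ii}$ is irreducible with eigenvalue $\lambda_i = \mathbb{1}$, the full family of regular solutions is precisely $\bm{x}_i = \bm{A}_{ii}^{\times}\bm{u}_i$, in agreement with the first branch of the corollary. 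For $i > r$ the block equation takes the shape $\bm{A}_{ii}\bm{x}_i \oplus \bm{b}_i = \bm{x}_i$ with $\bm{b}_i = \bigoplus_{j=1}^{i-1}\bm{A}_{ij}\bm{x}_j$ already determined from earlier blocks, and the hypothesis $\lambda_i = \mathbb{1}$ gives $\mathop\mathrm{Tr}(\bm{A}_{ii}) = \mathbb{1}$, so Theorem~\ref{T-Axbeqx}, case~2, delivers all regular solutions in the form $\bm{x}_i = \bm{A}_{ii}^{\ast}\bm{b}_i \oplus \bm{A}_{ii}^{\times}\bm{u}_i$, which matches the second branch.

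The main technical point I expect is verifying that Theorem~\ref{T-Axbeqx} is legitimately applicable at every recursive step for $i > r$. This requires, firstly, that $\bm{A}_{ii}$ be irreducible rather than the zero block, and secondly, that $\bm{b}_i$ be non-zero. The first requirement is automatic: a zero diagonal block would have only $\mathbb{0}$ as eigenvalue, contradicting $\lambda_i = \mathbb{1}$. The second is guaranteed by the refined form, which ensures that for each $i > r$ there exists some $j < i$ with $\bm{A}_{ij} \ne \bm{0}$, combined with the regularity of the inductively constructed $\bm{x}_j$. Once these two checks are handled, the block-wise union of parametrizations assembles into the stated description of the whole solution set, giving both necessity and sufficiency, and completing the proof.
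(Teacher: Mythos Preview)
Your proposal is correct and follows essentially the same route as the paper: specialize Theorem~\ref{T-minxAxAxx} to $\mu=\mathbb{1}$, collapse the characterizing system \eqref{I-Axleqmux-xleqmuAx} to the single equation $\bm{A}\bm{x}=\bm{x}$, and then solve this equation block by block via Theorem~\ref{T-Axbeqx}. Your write-up is in fact more explicit than the paper's short proof, in that you spell out the verification of the hypotheses of Theorem~\ref{T-Axbeqx} (irreducibility of $\bm{A}_{ii}$ and nonvanishing of $\bm{b}_i$) that the paper leaves implicit.
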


\begin{proof}
With $\mu=\mathbb{1}$, we represent inequalities \eqref{I-Axleqmux-xleqmuAx} as the double inequality $\bm{A}\bm{x}\leq\bm{x}\leq\bm{A}\bm{x}$, which is equivalent to the equality $\bm{A}\bm{x}=\bm{x}$. After representation of the vector $\bm{x}$ in block form, and successive application of Theorem~\ref{T-Axbeqx} to the equations for each block, we obtain the desired result. 
\end{proof}

In a similar way as for problem \eqref{P-minxAxAxx}, it is easy to verify the next statement.
\begin{corollary}
Under the conditions of Theorem~\ref{T-minxAxAxx}, the set of solution vectors of problem \eqref{P-minxAxAxx} is closed under vector addition and scalar multiplication.
\end{corollary}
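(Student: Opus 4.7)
The plan is to mimic exactly the approach used in Corollary~\ref{C-CuVASM} for the component problem, since Theorem~\ref{T-minxAxAxx} provides a complete characterization of the solution set by a pair of vector inequalities, namely $\bm{A}\bm{x}\leq\mu\bm{x}$ and $\bm{x}\leq\mu\bm{A}\bm{x}$. Thus, to show that the solution set of problem \eqref{P-minxAxAxx} is closed under addition and scalar multiplication, it suffices to verify that these two inequalities are preserved under the operations $\bm{z}=\alpha\bm{x}\oplus\beta\bm{y}$ for arbitrary scalars $\alpha,\beta\in\mathbb{X}$ and solutions $\bm{x},\bm{y}$.

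First I would take any two regular solutions $\bm{x}$ and $\bm{y}$, so that by Theorem~\ref{T-minxAxAxx} they both satisfy $\bm{A}\bm{x}\leq\mu\bm{x}$, $\bm{x}\leq\mu\bm{A}\bm{x}$, $\bm{A}\bm{y}\leq\mu\bm{y}$, and $\bm{y}\leq\mu\bm{A}\bm{y}$. Then, setting $\bm{z}=\alpha\bm{x}\oplus\beta\bm{y}$, I would verify the first inequality via the chain
$$
\bm{A}\bm{z}
=
\alpha\bm{A}\bm{x}\oplus\beta\bm{A}\bm{y}
\leq
\alpha\mu\bm{x}\oplus\beta\mu\bm{y}
=
\mu\bm{z},
$$
where the equality uses distributivity of matrix multiplication over idempotent addition and commutativity of scalar multiplication, and the inequality uses monotonicity of addition and scalar multiplication. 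Likewise, the second inequality follows from
$$
\bm{z}
=
\alpha\bm{x}\oplus\beta\bm{y}
\leq
\alpha\mu\bm{A}\bm{x}\oplus\beta\mu\bm{A}\bm{y}
=
\mu\bm{A}\bm{z}.
$$

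Hence $\bm{z}$ satisfies both inequalities of \eqref{I-Axleqmux-xleqmuAx}, and therefore lies in the solution set, which proves closure under both scalar multiplication (taking one of $\alpha,\beta$ to be $\mathbb{0}$) and vector addition (taking $\alpha=\beta=\mathbb{1}$). There is essentially no obstacle here: the only thing to be careful about is that regularity of $\bm{z}$ must be noted, which follows immediately since $\bm{z}\geq\alpha\bm{x}$ (or $\bm{z}\geq\beta\bm{y}$) provided the corresponding scalar is nonzero, and in the trivial case both scalars are zero the statement is vacuous. The argument is thus a direct parallel of the proof of Corollary~\ref{C-CuVASM}, merely duplicated to handle the system of two inequalities characterizing the solution set.
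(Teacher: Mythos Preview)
Your proof is correct and follows precisely the approach the paper intends: the paper does not give a detailed argument here but simply states that the result is verified ``in a similar way'' as Corollary~\ref{C-CuVASM}, and you have written out exactly that parallel argument, applying the monotonicity and distributivity properties to each of the two inequalities in \eqref{I-Axleqmux-xleqmuAx}.
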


\subsection{Derivation of complete solution}

We now follow similar arguments as before to describe a complete solution for the problem with the composite objective function under consideration.  

\begin{theorem}
Let $\bm{A}$ be a matrix in the refined block-triangular normal form \eqref{E-MNFr}, where the block $\bm{A}_{11}$ has eigenvalue $\lambda_{1}>\mathbb{0}$, and $\mu=\lambda_{1}\oplus\cdots\oplus\lambda_{s}\oplus\lambda_{1}^{-1}$. 

Denote by $\mathcal{A}$ the set of matrices $\bm{A}_{k}$ that are obtained from $\bm{A}$ by fixing one non-zero entry in each row and by setting the others to $\mathbb{0}$, and that satisfy the condition $\mathop\mathrm{Tr}(\bm{B}_{k})\leq\mathbb{1}$, where $\bm{B}_{k}=\bm{A}_{k}^{-}\bm{A}\oplus\mu^{-1}(\bm{A}_{k}^{-}\oplus\bm{A})$.

Then, all regular solutions of the system of inequalities \eqref{I-Axleqmux-xleqmuAx} are given by the conditions
\begin{equation}
\bm{x}
=
\bm{B}_{k}^{\ast}\bm{u},
\qquad
\bm{B}_{k}
=
\bm{A}_{k}^{-}\bm{A}\oplus\mu^{-1}(\bm{A}_{k}^{-}\oplus\bm{A}),
\qquad
\bm{A}_{k}
\in
\mathcal{A},
\qquad
\bm{u}
>
\bm{0}.
\label{E-xBku-BkAkAmu1AkA-AkA-u0}
\end{equation}
\end{theorem}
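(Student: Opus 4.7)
The plan is to follow the template of Theorem~\ref{T-xlambda1Ax} but to encode the two inequalities of \eqref{I-Axleqmux-xleqmuAx} into a single inequality of the form $\bm{x}\geq\bm{B}_k\bm{x}$, and then invoke Theorem~\ref{T-Axleqx}. Existence of a regular solution of \eqref{I-Axleqmux-xleqmuAx} comes for free from Theorem~\ref{T-minxAxAxx}, so the proof reduces to showing that a regular vector $\bm{x}$ satisfies \eqref{I-Axleqmux-xleqmuAx} if and only if it admits the representation \eqref{E-xBku-BkAkAmu1AkA-AkA-u0}.

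For the forward direction, I would take a regular solution $\bm{x}=(x_j)$ of \eqref{I-Axleqmux-xleqmuAx} and analyse $\bm{x}\leq\mu\bm{A}\bm{x}$ row by row exactly as in the passage from \eqref{I-xp} to \eqref{I-apqxq}, but with $\mu$ in place of $\lambda_{1}^{-1}$: in each row $p$ I fix an index $q$ at which the right-hand side attains its maximum, so that $a_{pq}>\mathbb{0}$ and $\mu a_{pq}x_q\geq x_p\oplus\mu(a_{p1}x_1\oplus\cdots\oplus a_{pn}x_n)$. Collecting these scalar inequalities by forming the matrix $\bm{A}_k$ whose only non-zero entry in row $p$ is $a_{pq}$ yields $\mu\bm{A}_k\bm{x}\geq(\mu\bm{A}\oplus\bm{I})\bm{x}$. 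Left-multiplying by $\mu^{-1}\bm{A}_k^-$ and using $\bm{A}_k^-\bm{A}_k\leq\bm{I}$ then gives
\begin{equation*}
\bm{x}\geq\bm{A}_k^-\bm{A}_k\bm{x}\geq\mu^{-1}\bm{A}_k^-(\mu\bm{A}\oplus\bm{I})\bm{x}=(\bm{A}_k^-\bm{A}\oplus\mu^{-1}\bm{A}_k^-)\bm{x}.
\end{equation*}
Combining this with $\bm{x}\geq\mu^{-1}\bm{A}\bm{x}$, which is merely a rewriting of the first inequality of \eqref{I-Axleqmux-xleqmuAx}, I obtain $\bm{x}\geq\bm{B}_k\bm{x}$ with $\bm{B}_k=\bm{A}_k^-\bm{A}\oplus\mu^{-1}(\bm{A}_k^-\oplus\bm{A})$. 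Theorem~\ref{T-Axleqx} applied to the regular vector $\bm{x}$ then forces $\mathop\mathrm{Tr}(\bm{B}_k)\leq\mathbb{1}$ and $\bm{x}=\bm{B}_k^{\ast}\bm{u}$ for some regular $\bm{u}$, so in particular $\bm{A}_k\in\mathcal{A}$ and $\bm{x}$ has the claimed form.

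For the converse, I would start from $\bm{x}=\bm{B}_k^{\ast}\bm{u}$ with $\bm{A}_k\in\mathcal{A}$ and $\bm{u}>\bm{0}$, and verify each inequality of \eqref{I-Axleqmux-xleqmuAx} separately. The condition $\mathop\mathrm{Tr}(\bm{B}_k)\leq\mathbb{1}$ yields $\bm{B}_k\bm{B}_k^{\ast}=\bm{B}_k^{+}\leq\bm{B}_k^{\ast}$. Since $\bm{B}_k\geq\mu^{-1}\bm{A}$, this gives $\mu^{-1}\bm{A}\bm{B}_k^{\ast}\leq\bm{B}_k\bm{B}_k^{\ast}\leq\bm{B}_k^{\ast}$, hence $\bm{A}\bm{x}\leq\mu\bm{x}$. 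For the other inequality I would use $\bm{B}_k\geq\mu^{-1}\bm{A}_k^-$ together with $\bm{A}\geq\bm{A}_k$, which implies $\bm{A}\bm{A}_k^-\geq\bm{A}_k\bm{A}_k^-\geq\bm{I}$, to deduce $\mu\bm{A}\bm{B}_k\geq\bm{A}\bm{A}_k^-\geq\bm{I}$, and therefore $\mu\bm{A}\bm{B}_k^{\ast}\geq\mu\bm{A}\bm{B}_k\bm{B}_k^{\ast}\geq\bm{B}_k^{\ast}$, which gives $\bm{x}\leq\mu\bm{A}\bm{x}$.

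The main obstacle, compared with the proof of Theorem~\ref{T-xlambda1Ax}, is identifying the correct composite matrix $\bm{B}_k$: one has to recognise that the three contributions $\bm{A}_k^-\bm{A}$, $\mu^{-1}\bm{A}_k^-$, and $\mu^{-1}\bm{A}$ arise, respectively, from the maximum-term extraction in rows of $\bm{x}\leq\mu\bm{A}\bm{x}$, from the absorption of $x_p$ into the isolated maximum, and from the direct scalar form of $\bm{A}\bm{x}\leq\mu\bm{x}$. Once this decomposition is in place, both halves of the argument parallel the component-objective case. In the converse direction it is essential that $\bm{B}_k$ dominates both $\mu^{-1}\bm{A}$, which is used to bound $\bm{A}\bm{x}$ from above, and $\mu^{-1}\bm{A}_k^-$, which is used through $\bm{A}\bm{A}_k^-\geq\bm{I}$ to bound $\bm{x}$ from above; this is precisely what the definition of $\bm{B}_k$ guarantees.
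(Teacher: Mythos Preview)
Your proposal is correct and follows essentially the same route as the paper's proof: the forward direction sparsifies $\bm{x}\leq\mu\bm{A}\bm{x}$ row by row to obtain $\bm{x}\geq(\bm{A}_k^{-}\bm{A}\oplus\mu^{-1}\bm{A}_k^{-})\bm{x}$, then adjoins $\bm{x}\geq\mu^{-1}\bm{A}\bm{x}$ and invokes Theorem~\ref{T-Axleqx}; the converse uses $\bm{B}_k\geq\mu^{-1}\bm{A}$ for the first inequality and $\mu\bm{A}\bm{B}_k\geq\bm{A}\bm{A}_k^{-}\geq\bm{I}$ together with $\bm{B}_k^{+}\leq\bm{B}_k^{\ast}$ for the second, exactly as the paper does.
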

\begin{proof}
In the similar way as for the problem with component objective function, we see that regular solutions to the system at \eqref{I-Axleqmux-xleqmuAx} exists.

To prove the theorem, we need to verify that any regular solution of the system at \eqref{I-Axleqmux-xleqmuAx} can be represented as \eqref{E-xBku-BkAkAmu1AkA-AkA-u0}, and vice versa.

Suppose that $\bm{x}$ is a regular solution of system \eqref{I-Axleqmux-xleqmuAx}. Observing that the vector $\bm{x}$ satisfies the second inequality $\bm{x}\leq\mu\bm{A}\bm{x}$ in \eqref{I-Axleqmux-xleqmuAx}, we use the same arguments as in the proof of Theorem~\ref{T-xlambda1Ax} to conclude that $\bm{x}$ satisfies the inequality $\bm{x}\geq\bm{A}_{k}^{-}(\bm{A}\oplus\mu^{-1}\bm{I})\bm{x}$, where $\bm{A}_{k}$ is a matrix obtained from $\bm{A}$ by leaving only one of non-zero entries in each row. Combining with the first inequality in the form $\bm{x}\geq\mu^{-1}\bm{A}\bm{x}$ results in
\begin{equation*}
\bm{x}
\geq
(\bm{A}_{k}^{-}\bm{A}\oplus\mu^{-1}(\bm{A}_{k}^{-}\oplus\bm{A}))\bm{x}
=
\bm{B}_{k}\bm{x}.
\end{equation*}

This inequality has a regular solution $\bm{x}$, which, by Theorem~\ref{T-Axleqx}, means that $\mathop\mathrm{Tr}(\bm{B}_{k})\leq\mathbb{1}$. In this case, we have $\bm{x}=\bm{B}_{k}^{\ast}\bm{u}$ for some $\bm{u}>\bm{0}$, and hence represent $\bm{x}$ in the form of \eqref{E-xBku-BkAkAmu1AkA-AkA-u0}.

Consider any vector $\bm{x}$ defined by the condition at \eqref{E-xBku-BkAkAmu1AkA-AkA-u0}, and verify that this vector satisfies both inequalities at \eqref{I-Axleqmux-xleqmuAx}. First, we note that $\mu\bm{B}_{k}=\mu\bm{A}_{k}^{-}\bm{A}\oplus\bm{A}_{k}^{-}\oplus\bm{A}\geq\bm{A}_{k}^{-}\oplus\bm{A}\geq\bm{A}$. Moreover, since $\mathop\mathrm{Tr}(\bm{B}_{k})\leq\mathbb{1}$, we have $\bm{B}_{k}^{+}=\bm{B}_{k}\bm{B}_{k}^{\ast}\leq\bm{B}_{k}^{\ast}$. As a result we obtain $\bm{A}\bm{B}_{k}^{\ast}\leq\mu\bm{B}_{k}\bm{B}_{k}^{\ast}=\mu\bm{B}_{k}^{+}\leq\mu\bm{B}_{k}^{\ast}$.

Therefore, we have $\bm{A}\bm{x}=\bm{A}\bm{B}_{k}^{\ast}\bm{u}\leq\mu\bm{B}_{k}^{\ast}\bm{u}=\mu\bm{x}$, which yields the left inequality at \eqref{I-Axleqmux-xleqmuAx}.

Furthermore, considering that $\mu\bm{B}_{k}\geq\bm{A}_{k}^{-}\oplus\bm{A}\geq\bm{A}_{k}^{-}$, we obtain
\begin{equation*}
\mu\bm{A}\bm{B}_{k}^{\ast}
=
\mu\bm{A}\bigoplus_{m=0}^{n-1}\bm{B}_{k}^{m}
\geq
\mu\bm{A}\bigoplus_{m=1}^{n}\bm{B}_{k}^{m}
=
\mu\bm{A}\bm{B}_{k}
\bigoplus_{m=0}^{n-1}\bm{B}_{k}^{m}
\geq
\bm{A}\bm{A}_{k}^{-}\bm{B}_{k}^{\ast}
\geq
\bm{B}_{k}^{\ast}.
\end{equation*}

Finally, we write $\mu\bm{A}\bm{x}=\mu\bm{A}\bm{B}_{k}^{\ast}\bm{u}\geq\bm{B}_{k}^{\ast}\bm{u}=\bm{x}$, and thus the right inequality at \eqref{I-Axleqmux-xleqmuAx} also holds.
\end{proof}

\subsection{Backtracking procedure for generating solution sets}

The backtracking procedure described above for the problem with component objective function to generate solutions of inequality \eqref{I-xleqlambda1Ax} can serve, after replacing $\lambda^{-1}$ by $\mu$, as an appropriate tool for generating solutions for the composite problem under study as well. Moreover, the characterization of solutions by two inequalities \eqref{I-Axleqmux-xleqmuAx} instead of one \eqref{I-xleqlambda1Ax} makes it possible to improve the procedure by reducing the number of non-zero entries in the initial matrix prior to generating sparsified matrices.

Consider the first vector inequality at \eqref{I-Axleqmux-xleqmuAx}, and note that it is equivalent to the system of scalar inequalities $\mu x_{i}\geq a_{ij}x_{j}$ for all $i,j=1,\ldots,n$.

Suppose that the condition $a_{ip}a_{pq}\geq\mu a_{iq}$ is valid for some indices $i$, $p$ and $q$, and examine the scalar inequality $x_{i}\leq\mu a_{i1}x_{1}\oplus\cdots\oplus\mu a_{in}x_{n}$ from the second inequality at \eqref{I-Axleqmux-xleqmuAx}. Since $\mu a_{ip}x_{p}\geq a_{ip}a_{pq}x_{q}\geq\mu a_{iq}x_{q}$, the term $\mu a_{ip}x_{p}$ dominates over $\mu a_{iq}x_{q}$. As a result, the last term can be eliminated by setting $a_{iq}=\mathbb{0}$, which does not alter the solution of the problem.

Finally, under the condition $a_{ip}a_{pi}\geq\mathbb{1}$, we have $\mu a_{ip}x_{p}\geq a_{ip}a_{pi}x_{i}\geq x_{i}$, which means that the term $\mu a_{ip}x_{p}$ makes the scalar inequality hold independently of the other terms. For these insufficient terms, we again replace the matrix entries $a_{ij}$ by $\mathbb{0}$ for all $j\ne p$ without affecting the solutions.

\subsection{Closed-form representation of complete solution}

Theorem~\ref{T-minAxx-CFR} is readily extended to the composite problem as follows.

\begin{theorem}
\label{T-minxAxAxx-CFR}
Let $\bm{A}$ be a matrix in the refined block-triangular normal form \eqref{E-MNFr}, where the block $\bm{A}_{11}$ has eigenvalue $\lambda_{1}>\mathbb{0}$, and $\mu=\lambda_{1}\oplus\cdots\oplus\lambda_{s}\oplus\lambda_{1}^{-1}$.

Denote by $\mathcal{A}$ the set of matrices $\bm{A}_{k}$ that are obtained from $\bm{A}$ by fixing one non-zero entry in each row and by setting the others to $\mathbb{0}$, and that satisfy the condition $\mathop\mathrm{Tr}(\bm{B}_{k})\leq\mathbb{1}$, where $\bm{B}_{k}=\bm{A}_{k}^{-}\bm{A}\oplus\mu^{-1}(\bm{A}_{k}^{-}\oplus\bm{A})$.

Let $\bm{S}$ be the matrix, which is constituted by the maximal linear independent system of columns in the matrices $\bm{B}_{k}^{\ast}$ for all $\bm{A}_{k}\in\mathcal{A}$. 

Then, the minimum value in problem \eqref{P-minAxx} is equal to $\mu$, and all regular solutions are given by
$$
\bm{x}
=
\bm{S}\bm{v},
\qquad
\bm{v}
>
\bm{0}.
$$
\end{theorem}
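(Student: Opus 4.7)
The plan is to reproduce, step for step, the argument given for Theorem~\ref{T-minAxx-CFR}, with each ingredient replaced by its composite-problem counterpart. First, I would invoke Theorem~\ref{T-minxAxAxx} to identify the minimum value as $\mu$ and to characterize all regular minimizers as the regular solutions of the system~\eqref{I-Axleqmux-xleqmuAx}. Second, I would apply the preceding closed-form-solution theorem (the one whose proof I just completed) to represent these regular solutions as the union, over all $\bm{A}_{k}\in\mathcal{A}$, of the families $\{\bm{B}_{k}^{\ast}\bm{u}:\bm{u}>\bm{0}\}$, each of which is precisely the linear span of the columns of the matrix $\bm{B}_{k}^{\ast}$.

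Third, I would invoke the corollary that the solution set of problem~\eqref{P-minxAxAxx} is closed under vector addition and scalar multiplication. Applied to the union above, this closure allows me to replace the union of spans by the single linear span of the union of all columns taken from the matrices $\bm{B}_{k}^{\ast}$: every such linear combination lies in the solution set, and conversely every solution already belongs to some $\bm{B}_{k}^{\ast}$-span and is therefore captured. To reach the compact form, I would then prune this column collection using the procedure recalled in Section~\ref{S-PDR}, discarding any vector that satisfies the linear-dependence criterion $(\bm{A}(\bm{b}^{-}\bm{A})^{-})^{-}\bm{b}=\mathbb{1}$ until the remaining columns form a maximal linearly independent system, and assembling them into the matrix $\bm{S}$. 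Because each discarded column is by construction a linear combination of the retained ones, the span is unchanged and no solution is lost, so every regular minimizer takes the form $\bm{x}=\bm{S}\bm{v}$ for some regular $\bm{v}$, and conversely any such $\bm{x}$ is a solution.

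The only point requiring genuine attention is the interplay between the three preceding results: the characterization from Theorem~\ref{T-minxAxAxx} must align with the parametric family produced by the derivation theorem, and the closure corollary must be applied to the union over $\mathcal{A}$ rather than to a single family. Both issues are immediate once noted, since all vectors involved are guaranteed to lie in the common regular solution set of \eqref{I-Axleqmux-xleqmuAx}, so I do not expect a substantive obstacle. The proof is essentially a direct transcription of the component-objective case, with $\lambda_{1}^{-1}$ replaced by $\mu$, the single inequality \eqref{I-xleqlambda1Ax} replaced by the system \eqref{I-Axleqmux-xleqmuAx}, and the matrix $\bm{B}_{k}$ redefined accordingly.
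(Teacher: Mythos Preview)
Your proposal is correct and follows exactly the approach the paper intends: the paper itself gives no explicit proof of this theorem, merely stating that ``Theorem~\ref{T-minAxx-CFR} is readily extended to the composite problem as follows,'' and your write-up is precisely that extension, invoking Theorem~\ref{T-minxAxAxx}, the composite-problem derivation theorem, and the closure corollary in the same roles as their component-problem counterparts.
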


\begin{example}
Let us apply Theorem~\ref{T-minxAxAxx-CFR} to derive all solutions of problem \eqref{P-minxAxAxx} with the matrix defined by \eqref{E-A100320001}. We take the minimum value $\mu=2$, and the sparsified matrices $\bm{A}_{1}$, $\bm{A}_{2}$, $\bm{A}_{3}$ and $\bm{A}_{4}$, obtained in Example~\ref{X-imatrix-CFS}, to calculate the matrices
\begin{gather*}
\bm{B}_{1}
=
\bm{A}_{1}^{-}\bm{A}\oplus\mu^{-1}(\bm{A}_{1}^{-}\oplus\bm{A})
=
\left(
\begin{array}{crr}
0 & -1 & \mathbb{0}
\\
1 & 0 & -1
\\
\mathbb{0} & -2 & -3
\end{array}
\right),
\\
\bm{B}_{2}
=
\bm{A}_{2}^{-}\bm{A}\oplus\mu^{-1}(\bm{A}_{2}^{-}\oplus\bm{A})
=
\left(
\begin{array}{crr}
0 & \mathbb{0} & \mathbb{0}
\\
1 & 0 & -1
\\
\mathbb{0} & -2 & -3
\end{array}
\right),
\\
\bm{B}_{3}
=
\bm{A}_{3}^{-}\bm{A}\oplus\mu^{-1}(\bm{A}_{3}^{-}\oplus\bm{A})
=
\left(
\begin{array}{crc}
0 & -1 & \mathbb{0}
\\
1 & 0 & \mathbb{0}
\\
\mathbb{0} & 1 & 0
\end{array}
\right),
\\
\bm{B}_{4}
=
\bm{A}_{4}^{-}\bm{A}\oplus\mu^{-1}(\bm{A}_{4}^{-}\oplus\bm{A})
=
\left(
\begin{array}{ccc}
0 & \mathbb{0} & \mathbb{0}
\\
1 & 0 & \mathbb{0}
\\
\mathbb{0} & 1 & 0
\end{array}
\right).
\end{gather*}

Since $\mathop\mathrm{Tr}(\bm{B}_{1})=\mathop\mathrm{Tr}(\bm{B}_{2})=\mathop\mathrm{Tr}(\bm{B}_{3})=\mathop\mathrm{Tr}(\bm{B}_{4})=0=\mathbb{1}$, all matrices satisfy the condition of Theorem~\ref{T-minxAxAxx-CFR}, and thus are accepted.

Furthermore, we obtain the matrices
\begin{gather*}
\bm{B}_{1}^{\ast}
=
\bm{I}\oplus\bm{B}_{1}\oplus\bm{B}_{1}^{2}
=
\left(
\begin{array}{rrr}
0 & -1 & -2
\\
1 & 0 & -1
\\
-1 & -2 & 0
\end{array}
\right),
\\
\bm{B}_{2}^{\ast}
=
\bm{I}\oplus\bm{B}_{2}\oplus\bm{B}_{2}^{2}
=
\left(
\begin{array}{rrr}
0 & \mathbb{0} & \mathbb{0}
\\
1 & 0 & -1
\\
-1 & -2 & 0
\end{array}
\right),
\\
\bm{B}_{3}^{\ast}
=
\bm{I}\oplus\bm{B}_{3}\oplus\bm{B}_{3}^{2}
=
\left(
\begin{array}{crc}
0 & -1 & \mathbb{0}
\\
1 & 0 & \mathbb{0}
\\
2 & 1 & 0
\end{array}
\right),
\\
\bm{B}_{4}^{\ast}
=
\bm{I}\oplus\bm{B}_{4}\oplus\bm{B}_{4}^{2}
=
\left(
\begin{array}{ccc}
0 & \mathbb{0} & \mathbb{0}
\\
1 & 0 & \mathbb{0}
\\
2 & 1 & 0
\end{array}
\right).
\end{gather*}

We now consider the set of columns of these matrices to find and eliminate those columns, which are linearly dependent on others. We take the rest of the columns to form the matrix $\bm{S}$, and represent all solutions as
\begin{equation*}
\bm{x}
=
\bm{S}\bm{v},
\qquad
\bm{S}
=
\left(
\begin{array}{rrc}
0 & \mathbb{0} & \mathbb{0}
\\
1 & 0 & \mathbb{0}
\\
-1 & -2 & 0
\end{array}
\right),
\qquad
\bm{v}
\in
\mathbb{R}^{3}.
\end{equation*}

It remains to note that, under the condition $v_{3}=v_{1}\oplus(-1)v_{2}$ (or, in the usual notation, $v_{3}=\max(v_{1},v_{2}-1$)), the obtained solution reduces to the partial solution given in Example~\ref{X-imatrix-CFS}. 
\qed
\end{example}

\section{Conclusions}
\label{S-C}

The paper focused on the development of methods and techniques for the complete solution of optimization problems, formulated in the framework of tropical mathematics to minimize nonlinear functions defined by a matrix on vectors over idempotent semifield. As starting point, we have taken our previous results, which offer partial solutions to the problems with both irreducible and reducible matrices. To extend these results further, we have derived a characterization of the solutions in the form of vector inequalities. We have developed an approach to describe all solutions of the problem as a family of solution subsets by using a matrix sparsification technique. To generate all members of the family in a reasonable way, we have proposed a backtracking procedure. Finally, we have represented the complete solutions of the problems in a compact vector form, ready for further analysis and calculation. The obtained results were illustrated with numerical examples.

The directions of future research will include the development of real-world applications of the proposed solutions. A detailed analysis of the computational complexity of the backtracking procedure is of particular interest. Various extensions of the solution to handle other classes of optimization problems with different objective functions and constraints, and in different algebraic settings are also considered promising lines of future investigation. 

\section*{Acknowledgments}
This work was supported in part by the Russian Foundation for Basic Research (grant number 18-010-00723). The author is very grateful to the referees for their extremely valuable comments and suggestions, which have been incorporated into the revised version of the manuscript.

\bibliographystyle{abbrvurl}

\bibliography{Complete_algebraic_solution_of_multidimensional_optimization_problems_in_tropical_semifield}

\end{document}